\newtheorem{theorem}{Theorem}[section]
\newtheorem{definition}[theorem]{Definition}
\newtheorem{lemma}[theorem]{Lemma}
\newtheorem{proposition}[theorem]{Proposition}
\newtheorem{example}[theorem]{Example}
\newtheorem{remark}[theorem]{Remark}
\title{ The $\alpha$-dependence of stochastic differential equations driven by variants of $\alpha$-stable processes}
\author{Jos{\'e} Lu{\'\i}s da Silva$^{1}$\thanks{Corresponding author }  \; and Mohamed Erraoui$^{2}$\\
$^{1}$ Centre of Exact Sciences and Engineering, \\ CCM, University of Madeira, 9000-390 Funchal, Portugal.\\
Email: luis@uma.pt\\
$^{2}$ Universit{\'e} Cadi Ayyad, Facult\'{e} des Sciences Semlalia, \\
D\'{e}partement de Math\'{e}matiques, BP 2390, Marrakech, Maroc\\
Email: erraoui@ucam.ac.ma
}
\begin{document}
\date{}
\maketitle

\begin{abstract}
In this paper we investigate two variants of $\alpha$-stable processes, namely tempered stable subordinators and 
modified tempered stable process as well as their renormalization.  We study the weak convergence in the
 Skorohod space and prove that they satisfy the uniform tightness condition. Finally, applications to the $\alpha$-dependence 
of the solutions of SDEs driven by these processes are discussed.

\noindent {\bf Keywords}: L{\'e}vy processes, Uniform tightness, Skorohod space, Weak convergence, SDEs. 
\end{abstract}

\section{Introduction}

In the last years, L{\'e}vy processes have received a great deal 
of attention fuelled by numerous  applications. First of all, we would like to mention the stochastic finance theory, 
 one of the principal subjects is the capital asset pricing model where the security price is allowed to have jumps, both big and small. 
Another reason to use models with jumps still in finance is for example in the stock market the price does not change continuously
 but change by units; the market is closed on weekends, holidays and opening prices often have jumps. 
We refer the interested reader to \cite{Rama-Tankov} for an introduction to some financial models driven by L{\'e}vy processes. 
Second, in electrical engineering it is known that the telephone noise is non-Gaussian and the noise is modeled by a L{\'e}vy process. 
Indeed, the work of \cite{SK} proposes to model the telephone noise  by a stable process as well as a L{\'e}vy process 
with both jumps and a Wiener component. The latter model is suggested by the different sources of noise, specifically thermal noise 
corresponds to the Wiener part and the jump term comes from possibly thunderstorm. As a third example where 
a stochastic differential equation driven by a L{\'e}vy process appears we mention the model of an infinite 
capacity dam subject to an additive input process and a general release rule. The dynamics of the content of the dam is given by

\begin{equation}
 dX_{t}=r(X_{t})dt+dZ_{t},\label{Fev11-eq1}
\end{equation}
where $Z$ is a L{\'e}vy process with nonnegative increments, $r(x)$ the release rate when the dam content is $x$. 
It has been suggested using empirical data  that the L{\'e}vy measure of $Z$ is the gamma measure, cf.~Moran (1969),
 see also \cite{PT} for the numerical schemes of such models. 
 When $Z$ stands for a NIG process, then equation (\ref{Fev11-eq1}) 
was proposed as a generalized Hull-White model in finance, see \cite{HM}.

In this paper we study the $\alpha$-dependence of the solutions of the stochastic differential equations (SDEs)
 driven by variants of $\alpha$-stable processes.

Firstly, we consider the classes of tempered stable subordinators $X^{TSS}_{\alpha}$ and 
modified tempered stable processes $X^{MTS}_{\alpha}$ with $\alpha \in (0,1/2)$
and prove the weak convergence,  in the Skorohod space endowed with the Skorohod topology,
 of $X^{TSS}_{\alpha}$ (resp.~$X^{MTS}_{\alpha}$) to the gamma process when $\alpha\rightarrow 0$ 
(resp.~the normal inverse Gaussian process $X^{NIG}$ when $\alpha\rightarrow 1/2$).
The family
$\{ X^{MTS}_{\alpha},\allowbreak \alpha \in (0,1/2)\}$ was considered in \cite{KRCB} to develop the GARCH option price model. 
We want also to point out that the weak convergence of $X^{TSS}_{\alpha}$ to the gamma process was first established 
in \cite{TVY2001} using other considerations.
Indeed it is proved that the gamma process has been obtained as weak limit of renormalized stable processes. 
The family of the renormalized stable processes we identify as the family of tempered stable processes 
$\{ X^{TSS}_{\alpha},\, \alpha \in (0,1/2)\}$, see Remark \ref{TVYprocess} below.
Moreover, we would like also to mention the work of \cite{Ryd1} where an approximation of 
the NIG process, based on an appropriate discretization of the L{\'e}vy measure, was discussed.  
In this paper, instead we use the modified tempered stable process as an approximation of the NIG process.

The continuous dependence consists in investigating the conditions under which the solutions converge weakly.
However, it is well known that the weak convergence is not sufficient to ensure the convergence of stochastic 
integral , see \cite{KP3} and references therein. Among the sufficient conditions 
we cite the uniform tightness (\textbf{UT}), introduced by \cite{S84}.
It should be noted that this condition has been used extensively to establish the results of stability of stochastic differential
equations since its introduction, see for example \cite{J}, \cite{JP}, \cite{JMP}, \cite{KP3} and \cite{MS}.
Thus we show that both driven families $\{ X^{TSS}_{\alpha},\, X^{MTS}_{\alpha},\, \alpha \in (0,1/2) \}$ 
satisfy the (\textbf{UT}) condition. This allows us to establish the continuous dependence result of SDEs driven by these families.

Secondly, it is proven in \cite{As-Ro} that the standard Brownian 
motion $\{W(t),\, t\in [0,1]\}$ is obtained as a weak limit, in the Skorohod space equipped with the uniform metric, 
 of a suitable renormalization of certain classes of L{\'e}vy process which 
includes the family $\{X^{MTS}_{\alpha},\, \alpha \in (0,1/2\}$. 
More precisely, the Brownian motion $W$ can be approximated by an appropriate renormalization of the compensated sum of
small jumps of a given L{\'e}vy process, see Proposition~\ref{21juiprop} below.
In the same spirit we mention the work \cite{covo} which completes, in some sense the previous one,
where it is shown that the process $\{t,\, t\in [0,1]\}$ is a weak limit of a renormalized (in an appropriate sense)
 sum of small jumps of classes of subordinator. 
We note that the family $\{X^{TSS}_{\alpha},\, \alpha \in (0,1/2\}$ is among those classes. 
These two results lead us to consider the dependence problem of SDEs driven by these renormalized processes.
The main tools we use to prove the continuous dependence result are the uniform tightness of the renormalized families
 and the stability of SDEs established in \cite{MS}.

\section{L{\'e}vy processes and infinite divisibility}
We start by recalling a few well-known facts about infinitely divisible distributions.
We consider a class of Borel measures on $\mathbb{R}$ satisfying
the following conditions:

\begin{eqnarray}
\Lambda(\{0\}) & = & 0,\label{Juin23eq1}\\
\int_{-\infty}^{+\infty}(s^{2}\wedge1)d\Lambda(s) & < & \infty.\label{Juin23eq2} \end{eqnarray}
This class will be denoted by $\mathfrak{M}$.

\cite{DeF} introduced the notion of an infinitely divisible
distribution and showed that they have an intimate relationship with
L{\'e}vy processes. By the L{\'e}vy-Kintchine formula, all infinitely divisible distributions $F_{\Lambda}$ are described via
their characteristic function: 
\[
\phi_{\Lambda}(u)=\int_{-\infty}^{+\infty}e^{iux}dF_{\Lambda}(x)=e^{\varPsi_{\Lambda}(u)}, ~u\in \mathbb{R},
\]
where the characteristic exponent $\varPsi_{\Lambda}$, is given as 
\[
\varPsi_{\Lambda}(u)=i b u-\frac{1}{2}cu^{2}+\int_{-\infty}^{+\infty}(e^{ius}-1-ius\,1\!\!1_{\{ |s|<1\}}(s))\, d\Lambda(s),
\]
where $b\in \mathbb{R}, c\geq0$. 

We assume as given a filtered probability space $(\Omega,\, \mathcal{F},\, P,\, (\mathcal{F}_{t})_{t\in [0,1]}) $ satisfying the 
usual hypothesis. 
\noindent A L{\'e}vy process $X=\{ X(t),t\in [0,1]\}$ has the property 
\[
\mathbb{E} (e^{iu X(t)})=e^{t\varPsi(u)},~ t\in\left[0,1\right], ~u\in \mathbb{R},
\]
 where $\varPsi(u)$ is the characteristic exponent of $X(1)$ which has an infinitely divisible distribution.  
Thus, any infinitely divisible distribution $F_{\Lambda}$
generates in a natural way a L{\'e}vy process $X$ by setting the law of $X(1)$, $\mathcal{L} (X(1))=F_{\Lambda}$. 
The three quantities $(b, c, \Lambda)$ determine the law $\mathcal{L} (X(1))$.
Since the distribution of a L{\'e}vy process $X=\{ X(t),t\in [0,1]\}$ is completely determined
by the marginal distribution $\mathcal{L} (X(1))$, and thus the process $X$ itself completely.
 The measure $\Lambda$ is called the L{\'e}vy measure whereas $(b, c, \Lambda)$ is called the L{\'e}vy-Khintchine triplet.

Let us now give some examples of L{\'e}vy processes which will be used later on. We will present three classes related to the sample 
path properties. Namely subordinators, processes with paths of finite and infinite variations. 
\subsection{Subordinators}
A subordinator is a one-dimensional increasing L{\'e}vy process starting from $0$. 
Subordinators form one of the simplest family of L{\'e}vy processes. The law of a subordinator is specified by the
Laplace transform of its one dimensional distributions. We assume throughout this paper that these processes have no drift. 

We consider a subclass in $\mathfrak{M}$ of measures supported on $\mathbb{R}_{+}$ satisfying the following 
\begin{eqnarray}
\Lambda(0,\infty) & = & \infty,\label{Mai31eq3}\\
\int_{0}^{1}s\,d\Lambda(s) & < & \infty.\label{Mai31eq4}
\end{eqnarray}
Any L{\'e}vy measure $\Lambda$ satisfying conditions (\ref{Mai31eq3}) and (\ref{Mai31eq4}) 
generates a subordinator $X$, see for example \cite[Theorem 1.2]{Ber}. 
We can therefore give its Laplace transform 
\[
\psi_{\Lambda}(u):=\mathbb{E} (e^{-u X(1)})= \exp\left(\int_{0}^{\infty}(e^{-su}-1)\, d\Lambda(s)\right),\quad u\in\mathbb{R}_{+}.
\]
\begin{remark}

\begin{description}
 \item[\textbf{(i)}]When $X$ is a subordinator, the Laplace transform of its marginal distributions is much more useful,
 for both theoretical and practical applications, than the characteristic function. 
 \item[\textbf{(ii)}]
The assumption (\ref{Mai31eq3}) implies that the process $X$ has infinite
activity, that is, almost all paths have infinitely many jumps along
any time interval of finite length. Whereas the condition (\ref{Mai31eq4})
guarantees that almost all paths of $X$ have finite variation. 
 \end{description}
\end{remark}
 \subsection*{Examples} 

\noindent \begin{enumerate}
\item \textbf{Gamma process}. Consider the L\'{e}vy measure $\Lambda_{\gamma}$ with
density with respect to the Lebesgue measure defined by\[
d\Lambda_{\gamma}(s):=\frac{e^{-s}}{s}1\!\!1_{\{s>0\}}ds.\]
Then the corresponding process is known as gamma process. A simple
calculation shows that\[
\psi_{\Lambda_{\gamma}}(u)=\frac{1}{1+u},\]
and the Laplace transform of the corresponding process
has the form\[
\mathbb{E}_{\mu_{\gamma}}\left(e^{-uX_{\gamma}(t)}\right)=\exp\left(-t\log(1+u)\right)=\frac{1}{(1+u)^{t}},~t\in [0,1].\]
Here $\mu_{\gamma}$ denotes the law  of $X_{\gamma}(1)$. 

\item \textbf{~Stable subordinator (SS)}. Let $\alpha\in(0,1)$ be given
and let $\Lambda^{SS}_{\alpha}$ be the L{\'e}vy measure given by\[
d\Lambda^{SS}_{\alpha}(s):=\frac{\alpha}{\Gamma(1-\alpha)}\frac{1}{s^{1+\alpha}}1\!\!1_{\{s>0\}}ds.\]
Then we have
\[\psi_{\Lambda}(u)=\exp\left(-u^{\alpha}\right),
 \]
and 
\[
\mathbb{E}_{\mu^{SS}_{\alpha}}\left(e^{-uX^{SS}_{\alpha}(t)}\right)=\exp\left(-tu^{\alpha}\right),~t\in [0,1].\]
Here $\mu^{SS}_{\alpha}$ denotes the law of $X^{SS}_{\alpha}(1)$.
\item \textbf{~Tempered stable subordinator (TSS).} A tempered stable subordinator is obtained by taking a 
stable subordinator and multiplying the L\'{e}vy measure by an exponential function, that is, an exponentially tempered 
version of the stable subordinator. More precisely, for $\alpha\in(0,1)$, we consider the L\'{e}vy measure 
\begin{equation}\label{equiv}
 d\Lambda_{\alpha}^{TSS}(s):=\frac{1}{\alpha} e^{-s}d\Lambda^{SS}_{\alpha}(s)=\frac{1}{\Gamma(1-\alpha)}\frac{e^{-s}}{s^{1+\alpha}}1\!\!1_{\{s>0\}}ds.
\end{equation}
Then we have\[
\psi_{\Lambda_{\alpha}^{TSS}}(u)=\exp\left(\frac{1-(1+u)^{\alpha}}{\alpha}\right)\]
and \[
\mathbb{E}_{\mu^{TSS}_{\alpha}}\left(e^{-uX_{\alpha}^{TSS}(t)}\right)=\exp\left(-t\frac{1-(1+u)^{\alpha}}{\alpha}\right),~t\in [0,1].\]
\end{enumerate}
Now let us give a concrete realization of a subordinator due to \cite{TVY2001}.
We denote by \[
D=\left\{ \eta=\sum z_{i}\delta_{x_{i}},\; x_{i}\in[0,1],\; z_{i}\in\mathbb{R_{+}},\;\sum|z_{i}|<\infty\right\} \]
the real linear space of all finite real discrete measures in $[0,1]$.
We define the coordinate process $\left\{ X(t),t\in\left[0,1\right]\right\} $  on $D$ by \[
X(t):D\longrightarrow\mathbb{R_{+}},\;\eta\mapsto X(t)(\eta):=\eta([0,t]),\quad t\in[0,1]\]
and $\mathcal{F}_{t}:=\sigma(X(s),\; s\leq t)$ denotes its own filtration. 

Let $\Lambda$ be a L{\'e}vy measure satisfying conditions (\ref{Mai31eq3}) and (\ref{Mai31eq4}) and $\mu_{\Lambda}$ be 
a probability measure on $(D,\mathcal{F}_{1})$ with Laplace transform given by \[
\mathbb{E}_{\mu_{\Lambda}}\left(\exp\left(-\int_{0}^{1}f(t)\, d\eta(t)\right)\right)=\exp\left(\int_{0}^{1}\log(\psi_{\Lambda}(f(t))\, dt\right).\]
Here $f$ is an arbitrary non-negative bounded Borel function on $[0,1]$.
In particular, when $f(s)=u1\!\!1_{[0,t]}(s)$, $u>0$, $t\in(0,1]$
the Laplace transform of $X(t)$ is given by\[
\mathbb{E}_{\mu_{\Lambda}}(e^{-uX(t)})=\exp\left(t\log(\psi_{\Lambda}(u))\right), ~t\in [0,1].\]
We call the pair $(X,\mu_{\Lambda})$ a realization of a L\'{e}vy process
with L\'{e}vy measure $\Lambda$ which is a subordinator, cf. \cite [Remark 2.1]{TVY2001}. 

Now we would like to highlight the link between tempered stable and stable subordinators. First of all it follows from (\ref{equiv}) 
that the L{\'e}vy measures $\Lambda_{\alpha}^{TSS}$ and $\Lambda^{SS}_{\alpha}$ are equivalent. Then we obtain from \cite[Theorem~33.1]{S} 
that $X_{\alpha}^{SS}$ and $X_{\alpha}^{TSS}$ have equivalent laws with density given in \cite[Theorem~33.2]{S}, see (\ref{eqdensity}) below. 
We notice that the authors in \cite{TVY2001,VY} constructed a family of measures, equivalent 
to $\alpha$-stable laws with given densities which converges weakly to the gamma measure. 
This is the content of the following remark.
\begin{remark}\label{TVYprocess}
Let $\tilde{X}_{\alpha}$ be a process such that the law $\tilde{\mu}_{\alpha}:=\mathcal{L} (\tilde{X}_{\alpha}(1))$ is 
equivalent to $\mu^{SS}_{\alpha}$ with density  
\begin{equation}\label{eqdensity}
\frac{d\tilde{\mu}_{\alpha}}{d\mu^{SS}_{\alpha}}(\eta)=\frac{\exp(-\alpha^{-1/\alpha}X(1)(\eta))}
{\mathbb{E}_{\mu^{SS}_{\alpha}}\left(\exp(-\alpha^{-1/\alpha}X(1)(\eta))\right)}
= e^{\alpha^{-1}}e^{-\alpha^{-1/\alpha}X(1)(\eta)}.
\end{equation}
Then the law of the tempered stable subordinator $X_{\alpha}^{TSS}$ is nothing but the
law of the process $\alpha^{-1/\alpha}\tilde{X}_{\alpha}$.
\end{remark}
\subsection{L{\'e}vy processes with finite variation paths}
We consider a L{\'e}vy process with the following triplet $(0,0,\Lambda)$.
We are interested here in the subclass of $\mathfrak{M}$ satisfying
\begin{eqnarray}
\Lambda(\mathbb{R}) & = & \infty,\label{Aout11eq1}\\
\int_{\left|s\right| \leq1}\left| s \right|\,d\Lambda(s) & < & + \infty.\label{Aout11eq2}
\end{eqnarray}
Condition (\ref{Aout11eq2}) means that the corresponding L{\'e}vy process has finite variation paths. 
\subsection*{Examples}
\begin{enumerate}
\item \textbf{~Stable process (S)}.
Symmetric $\alpha$-stable processes $X^{S}_{\alpha}$, with $\alpha \in (0,1)$, are the class of L{\'e}vy processes whose characteristic exponents 
correspond to those of symmetric $\alpha$-stable distributions. The corresponding L{\'e}vy measure is given by
\[
d\Lambda^{S}_{\alpha}(s)=\left(\frac{1}{\left|s\right|^{1+\alpha}}1\!\!1_{\{s<0\}}+
\frac{1}{s^{1+\alpha}}1\!\!1_{\{s>0\}}\right)ds.\]
The characteristic exponent $\varPsi_{\Lambda^{S}_{\alpha}}$ has the form
\[
\varPsi_{\Lambda^{S}_{\alpha}}(u)=-|u|^{\alpha},~ u\in\mathbb{R}.
\]

\item \textbf{~Tempered stable process (TS)}. 
It is well known that $\alpha$-stable distributions, with $\alpha\in(0,1)$, have infinite $p$-th moments
for all $p\geq \alpha$. This is due to the fact
that its L{\'e}vy density decays polynomially. Tempering the tails
with the exponential rate is one choice to ensure finite moments.
The tempered stable distribution is then obtained by taking a symmetric
$\alpha$-stable distribution and multiplying its L{\'e}vy measure
by an exponential functions on each half of the real axis. In explicit 

\[
d\Lambda^{TS}_{\alpha}(s)=\left(\frac{e^{-\left|s\right|}}{\left|s\right|^{1+\alpha}}1\!\!1_{\{s<0\}}+\frac{e^{-s}}{s^{1+\alpha}}1\!\!1_{\{s>0\}}\right)ds.\]
The characteristic exponent $\varPsi_{\Lambda^{TS}_{\alpha}}$ is given by
\[
\varPsi_{\Lambda^{TS}_{\alpha}}(u)=\Gamma(-\alpha)[(1-iu)^{\alpha}+(1+iu)^{\alpha}-2],~ u\in\mathbb{R}.
\]
The associated L{\'e}vy process will be called tempered stable process and denoted by $X_{\alpha}^{TS}$.

\item \textbf{~Modified tempered stable process (MTS).} 

The MTS distribution is obtained by taking an $\alpha$-stable law
with $\alpha\in(0,1/2)$ and multiplying the L{\'e}vy measure by a modified
Bessel function of the second kind on each side of the real
axis. It is infinitely divisible and has finite moments of all orders. It behaves asymptotically
like the $2\alpha$-stable distribution near zero and like the TS distribution on the tail. 
Then the L{\'e}vy density is given by 
\[
d\Lambda^{MTS}_{\alpha}(s)=\frac{1}{\pi}\left(\frac{K_{\alpha+\frac{1}{2}}(\left|s\right|)}{\left|s\right|^{\alpha+\frac{1}{2}}}1\!\!1_{\{s<0\}}+\frac{K_{\alpha+\frac{1}{2}}(s)}{s^{\alpha+\frac{1}{2}}}1\!\!1_{\{s>0\}}\right)ds.
\]
 $K_{\alpha+\frac{1}{2}}$ is the modified Bessel function of the second kind given by the following integral representation
\begin{equation}
K_{\alpha+\frac{1}{2}}(s)=
\frac{1}{2}\left(\frac{s}{2}\right)^{\alpha+\frac{1}{2}}\int_{0}^{+\infty}
\exp\left( -t- \frac{s^{2}}{4t}\right) t^{-\alpha-\frac{3}{2}}\, dt.\label{Aout26eq2} 
\end{equation}
The characteristic exponent has the form 
\[
\varPsi_{\Lambda^{MTS}_{\alpha}}(u)=\frac{1}{\sqrt{\pi}}\,2^{-\alpha-\frac{1}{2}}\,\Gamma(-\alpha)[(1+u^{2})^{\alpha}-1], ~u\in\mathbb{R}.
\]
The induced L{\'e}vy process, denoted by $X_{\alpha}^{MTS}$, will be called modified tempered stable process. 
For additional details on MTS distributions the reader may consult \cite{KRCB}. 
\end{enumerate}

\subsection{L{\'e}vy process of infinite variation paths}
Finally, we would like to consider a subclass of $\mathfrak{M}$ satisfying (\ref{Aout11eq1}) and the following condition
 
\begin{equation}
{\displaystyle \int_{\left|s\right| \leq1} \left| s \right|\,d\Lambda(s)}  =  \infty.\label{Aout26eq1}\end{equation} 

\subsection*{Examples}                                                                                                       
\begin{enumerate}
\item 
Symmetric $\alpha$-stable processes, tempered stable processes, with $\alpha \in (1,2)$ and 
modified tempered stable processes, with $\alpha \in (1/2,1)$.

\item\textbf{~Normal inverse Gaussian process (NIG).} 
The NIG distribution was introduced
in finance by Barndorff-Nielsen. It might be of interest to know that the NIG
distribution is a special case of the generalized hyperbolic
distribution, introduced also by Barndorff-Nielsen to model the logarithm
of particle size, see references below.

Let $\{X^{NIG}(t),\, t\in [0,1]\}$ be a L{\'e}vy process with L{\'e}vy measure given by
\[
 d\Lambda^{NIG}(s)=\frac{K_{1}(|s|)}{\pi |s|}ds,
\]
where $K_{1}$ is the modified Bessel function of the second
kind with index $1$. The characteristic exponent is equal to 
\[
\varPsi_{\Lambda^{\mathrm{NIG}}}(u)= \left(1-\sqrt{1+u^{2}}\right), u\in \mathbb{R}.
\]
 
The process $\{X^{NIG}(t),\, t\in [0,1]\}$ is a L{\'e}vy process with the triplet $(0,0,\Lambda^{NIG})$. 

For further results related to the normal inverse Gaussian distributions see
\cite{Br1,Br2}  and \cite{Ryd1,Ryd2}.
\end{enumerate}
We conclude this section with the following remark. 

\begin{remark}\label{7octrem}
All L{\'e}vy processes considered before are such that:
\begin{enumerate}
\item  Their paths belong to the set of all c{\`a}dl{\`a}g 
functions, denoted by $\mathbb{D}([0,1],\mathbb{R})$, i.e.~all real-valued right continuous with left limits functions on $[0,1]$.  
\item They are pure jump semimartingales processes without fixed times of discontinuity.
\end{enumerate}
\end{remark}

\section{Weak convergence and uniform tightness}
In this section at first we present a result on weak convergence of the above families in $\mathbb{D}([0,1],\mathbb{R})$ 
endowed with the Skorohod topology $\mathcal{J}_{1}$, ($\mathbb{D},\,\mathcal{J}_{1}$).
 This convergence will be denoted by  \textquotedblleft$\overset{\mathbb{D}}{\longrightarrow}$\textquotedblright. 
On the other hand, since we will deal with continuous limit processes, we are interested in the tightness and 
weak convergence in the space $\mathbb{D}([0,1],\mathbb{R})$ equipped with the uniform topology $\mathcal{U}$, $(\mathbb{D},\mathcal{U})$. 
We will denoted them by \textquotedblleft$\mathbb{C}$-tight\textquotedblright \,and 
\textquotedblleft$\overset{\mathbb{C}}{\longrightarrow}$\textquotedblright, respectively.
Finally, after recalling the definition of the uniform tightness as well as a useful criterion, 
we prove that the processes considered satisfy this condition, cf. Propositions \ref{prop1sept20} and \ref{prop2sept20} below.

\subsection{Weak convergence in $(\mathbb{D},\mathcal{J}_{1})$}

In this subsection we present the weak convergence in $(\mathbb{D},\mathcal{J}_{1})$ 
of the families of processes $\{X_\alpha^{TSS},\alpha\in(0,1/2)\}$ and $\{X_\alpha^{MSS},\;\alpha\in(0,1/2)\}$. 
We start with the following elementary lemma. 

\begin{lemma}
 We have the following weak convergence of the one dimensional law:

\begin{description}
 \item[(i)]  $X_{\alpha}^{TSS}(1)\, \overset{\mathcal{L}}{\longrightarrow} \, X_{\gamma}(1)$, $\alpha \rightarrow 0 $.
\item[(ii)]  $X_{\alpha}^{MTS}(1) \, \overset{\mathcal{L}}{\longrightarrow}\, X^{NIG}(1)$, $\alpha \rightarrow 1/2 $.
 \end{description}
\label{Re-TVY} \end{lemma}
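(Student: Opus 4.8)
The plan is to establish both convergences at the level of transforms and then invoke the appropriate continuity theorem, since weak convergence of one--dimensional laws is equivalent to pointwise convergence of characteristic transforms (or, for nonnegative variables, Laplace transforms) to a limit that is itself the transform of a probability law.

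For part \textbf{(i)}, since $X_{\alpha}^{TSS}$ is a subordinator, $X_{\alpha}^{TSS}(1)$ is nonnegative and it is natural to work with the Laplace transform. Using the formula for $\psi_{\Lambda_{\alpha}^{TSS}}$ recorded above together with $\psi_{\Lambda}(u)=\mathbb{E}(e^{-uX(1)})$, one has for $u\ge 0$
\[
\mathbb{E}_{\mu^{TSS}_{\alpha}}\!\left(e^{-uX_{\alpha}^{TSS}(1)}\right)=\exp\!\left(\frac{1-(1+u)^{\alpha}}{\alpha}\right).
\]
Writing $(1+u)^{\alpha}=e^{\alpha\log(1+u)}=1+\alpha\log(1+u)+O(\alpha^{2})$, the exponent satisfies $\frac{1-(1+u)^{\alpha}}{\alpha}\longrightarrow-\log(1+u)$ as $\alpha\to 0$, so the Laplace transform converges pointwise to $e^{-\log(1+u)}=(1+u)^{-1}$, which is exactly the Laplace transform of $X_{\gamma}(1)$ from the gamma example. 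As the limit is continuous at $u=0$ with value $1$, the continuity theorem for Laplace transforms gives $X_{\alpha}^{TSS}(1)\overset{\mathcal{L}}{\longrightarrow}X_{\gamma}(1)$.

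For part \textbf{(ii)}, the process $X_{\alpha}^{MTS}$ is symmetric and not monotone, so I would instead use the characteristic function and the L\'evy continuity theorem. From the characteristic exponent above,
\[
\mathbb{E}\!\left(e^{iuX_{\alpha}^{MTS}(1)}\right)=\exp\!\left(\frac{1}{\sqrt{\pi}}\,2^{-\alpha-\frac12}\,\Gamma(-\alpha)\bigl[(1+u^{2})^{\alpha}-1\bigr]\right).
\]
Each factor in the exponent is continuous in $\alpha$ at $\alpha=\tfrac12$; in particular $\Gamma(-\alpha)$ is finite and continuous there since $-\tfrac12$ is not a pole of $\Gamma$. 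It therefore suffices to pass to the limit by continuity at $\alpha=\tfrac12$, where $2^{-\alpha-1/2}=\tfrac12$, $\Gamma(-\tfrac12)=-2\sqrt{\pi}$ and $(1+u^{2})^{1/2}=\sqrt{1+u^{2}}$, so that the prefactor collapses to $\tfrac{1}{\sqrt{\pi}}\cdot\tfrac12\cdot(-2\sqrt{\pi})=-1$ and the exponent tends to $1-\sqrt{1+u^{2}}=\varPsi_{\Lambda^{NIG}}(u)$. Hence the characteristic functions converge pointwise to $e^{\varPsi_{\Lambda^{NIG}}(u)}$, which is continuous at $u=0$ and is the characteristic function of $X^{NIG}(1)$, and the L\'evy continuity theorem yields $X_{\alpha}^{MTS}(1)\overset{\mathcal{L}}{\longrightarrow}X^{NIG}(1)$.

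Both statements are elementary, so I do not expect a genuine obstacle; the only points demanding care are the asymptotic expansion $\frac{1-(1+u)^{\alpha}}{\alpha}\to-\log(1+u)$ in \textbf{(i)} (equivalently a one--line application of L'H\^opital's rule in $\alpha$) and, in \textbf{(ii)}, verifying that the seemingly $\alpha$--dependent constant built from $2^{-\alpha-1/2}$ and $\Gamma(-\alpha)$ tends to exactly $-1$, which is what forces the MTS exponent to reproduce the NIG exponent. Since both limit transforms are continuous at the origin, identifying them as the transforms of $X_{\gamma}(1)$ and $X^{NIG}(1)$ is immediate from the defining formulas, and no tightness or uniformity argument is needed at the level of one--dimensional marginals.
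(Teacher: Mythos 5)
Your proof is correct, and for part (ii) it is essentially the paper's own argument: the paper merely asserts that the characteristic exponent $\varPsi_{\Lambda^{MTS}_{\alpha}}$ converges to $\varPsi_{\Lambda^{NIG}}$ as $\alpha\to 1/2$ and concludes by L\'evy's continuity theorem, while you supply the computation it calls ``easy to see'', namely that the prefactor $\frac{1}{\sqrt{\pi}}2^{-\alpha-1/2}\Gamma(-\alpha)$ tends to $-1$ (via $\Gamma(-1/2)=-2\sqrt{\pi}$), which is precisely what collapses the MTS exponent to $1-\sqrt{1+u^{2}}$. For part (i), however, you take a genuinely different route. The paper proves nothing directly: it cites Proposition 6.3 of Tsilevich--Vershik--Yor (2001), i.e.\ the weak convergence of suitably renormalized stable laws to the gamma measure, which rests on the equivalence of laws described in Remark \ref{TVYprocess}. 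You instead read off the Laplace transform $\exp\bigl((1-(1+u)^{\alpha})/\alpha\bigr)$ already displayed in Section 2, observe that the exponent tends to $-\log(1+u)$ as $\alpha\to 0$, identify the limit $(1+u)^{-1}$ as the Laplace transform of $X_{\gamma}(1)$, and invoke the continuity theorem for Laplace transforms. Your version is more elementary and self-contained, using only formulas recorded in the paper and an expansion of $(1+u)^{\alpha}$; the paper's citation instead ties the statement to the Tsilevich--Vershik--Yor framework of renormalized stable processes, which the authors exploit elsewhere in the paper. Both arguments are valid, and, as you correctly note, no tightness or uniformity considerations are needed at the level of one-dimensional marginals.
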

\begin{proof}
 The result in (i) is a consequence of Proposition 6.3 in \cite{TVY2001}. 
\newline (ii)
 It is easy to see that the characteristic exponent $\varPsi_{\Lambda^{MTS}_{\alpha}}(1)$ converge
 to $\varPsi_{\Lambda^{NIG}}(1)$ when $\alpha$ goes to $1/2$. This implies that $X_{\alpha}^{MTS}(1)$ converge weakly to $X^{NIG}(1)$.
\end{proof}

\begin{proposition}\label{TVYconverge}
We have the following weak convergence in $(\mathbb{D},\mathcal{J}_{1})$:
\begin{description}
\item [(i)] $ X_{\alpha}^{TSS}\overset{\mathbb{D}}{\longrightarrow} 
X_{\gamma}$, as $\alpha \rightarrow 0$.
\item [(ii)] $X_{\alpha}^{MTS} \overset{\mathbb{D}}{\longrightarrow}
X^{NIG}$, as $\alpha \rightarrow 1/2$.
\end{description}
  \label{Re1-ES} \end{proposition}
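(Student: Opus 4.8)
The plan is to reduce the functional statement to the one-dimensional convergence already recorded in Lemma~\ref{Re-TVY}, exploiting the rigidity of L\'evy processes: for processes with stationary and independent increments, weak convergence in $(\mathbb{D},\mathcal{J}_{1})$ is equivalent to convergence in law of the single marginal at time $1$, provided the limit is itself a L\'evy process. Both candidate limits here, the gamma process $X_{\gamma}$ and the NIG process $X^{NIG}$, are genuine L\'evy processes with c\`adl\`ag paths and no fixed times of discontinuity (Remark~\ref{7octrem}), so this criterion applies. Granting it, part~(i) is immediate from Lemma~\ref{Re-TVY}(i) and part~(ii) from Lemma~\ref{Re-TVY}(ii), with nothing further to compute. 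I would cite the classical equivalence in the form given by Jacod and Shiryaev (see Corollary~VII.3.6 of their monograph on limit theorems for stochastic processes).

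If one prefers a self-contained argument rather than invoking that result, I would split the proof into finite-dimensional convergence and tightness. Write $\varPsi_{\alpha}$ for the characteristic exponent of $X_{\alpha}(1)$, where $X_{\alpha}$ denotes either $X_{\alpha}^{TSS}$ or $X_{\alpha}^{MTS}$, and $\varPsi$ for that of the limit. From $\mathbb{E}(e^{iuX_{\alpha}(1)})=e^{\varPsi_{\alpha}(u)}\to e^{\varPsi(u)}$ together with the continuity of each $\varPsi_{\alpha}$ and $\varPsi_{\alpha}(0)=0$, the distinguished-logarithm argument yields $\varPsi_{\alpha}(u)\to\varPsi(u)$ for every $u$. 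For the subordinator case one may equally work with Laplace exponents, where $\psi_{\Lambda_{\alpha}^{TSS}}(u)=\exp((1-(1+u)^{\alpha})/\alpha)\to(1+u)^{-1}=\psi_{\Lambda_{\gamma}}(u)$ as $\alpha\to0$ is transparent. Stationarity and independence of increments then promote pointwise convergence of the exponent to convergence of all joint characteristic functions, hence of every finite-dimensional distribution of $X_{\alpha}$ to that of the limit.

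For the tightness step I would show that the laws of $X_{\alpha}^{TSS}$ (resp.\ $X_{\alpha}^{MTS}$) form a relatively compact family in $(\mathbb{D},\mathcal{J}_{1})$ as $\alpha\to0$ (resp.\ $\alpha\to1/2$) by checking convergence of the L\'evy--Khintchine triplets in the sense of the functional limit theorems for semimartingales: convergence of the truncated drifts, of the modified second characteristics, and of $\int g\,d\Lambda_{\alpha}\to\int g\,d\Lambda$ for every bounded continuous $g$ vanishing near the origin. Combined with the finite-dimensional convergence above, this delivers $\mathcal{J}_{1}$-convergence.

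The main obstacle, and the only place where genuine work is required, is this triplet convergence for the modified tempered stable family, because its L\'evy density $\tfrac{1}{\pi}K_{\alpha+\frac{1}{2}}(|s|)|s|^{-\alpha-\frac{1}{2}}$ is expressed through Bessel functions and must be shown to converge, as $\alpha\to1/2$, to the NIG density $K_{1}(|s|)/(\pi|s|)$, with integrability control uniform in $\alpha$ near $1/2$; here the integral representation~(\ref{Aout26eq2}) and the asymptotics of $K_{\alpha+\frac{1}{2}}$ near $0$ and $\infty$ are the tools I would deploy. The tempered stable case is by contrast soft, since the relevant densities converge monotonically and the whole family can be compared to the stable one through~(\ref{equiv}). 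Given the strength of the cited L\'evy-process criterion, however, I expect the cleanest write-up to bypass these estimates entirely and simply apply that criterion to Lemma~\ref{Re-TVY}.
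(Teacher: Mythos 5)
Your proposal is correct and, in its preferred form, is exactly the paper's proof: both reduce the functional statement to the one-dimensional convergence of Lemma~\ref{Re-TVY} by invoking the Jacod--Shiryaev criterion (their Corollary~VII.3.6) that for processes with stationary independent increments, convergence in law of the time-$1$ marginal is equivalent to weak convergence in $(\mathbb{D},\mathcal{J}_{1})$. The self-contained alternative you sketch (finite-dimensional convergence plus triplet/tightness estimates) is a reasonable backup but is not needed, and the paper does not pursue it.
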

\begin{proof}
Since L{\'e}vy processes are semimartingales with stationary independent increments, then 
it follows from \cite[Corollary~3.6]{JS} that the convergence of the marginal laws of $X_{\alpha}^{TSS}(1)$ and 
$X_{\alpha}^{MTS}(1)$ is equivalent to the weak convergence of processes $X_{\alpha}^{TSS}$ and 
$X_{\alpha}^{MTS}$ in $(\mathbb{D},\mathcal{J}_{1})$.
 \end{proof}

Now we are interested in the weak convergence 
of certain renormalization of pure jump subordinator.
Let $X$ be a subordinator with L{\'e}vy measure $\Lambda$ satisfying the conditions (\ref{Mai31eq3})-(\ref{Mai31eq4}) and 
$X_{\varepsilon}$ be the sum of its jumps of size in $(0,\varepsilon)$. Then the corresponding L{\'e}vy measure $\Lambda_{\varepsilon}$
is nothing but the restriction of $\Lambda$ to $(0,\varepsilon]$.  
We denote the expectation of $X_{\varepsilon}(1)$ by $\mu(\varepsilon):=\int_{(0,\varepsilon]}sd\Lambda(s)$.
We consider the renormalized process $Y_{\varepsilon}:=\mu(\varepsilon)^{-1}X_{\varepsilon}$ and state 
the following convergence result proved in \cite{covo}.
\begin{proposition}
 The following statements hold, as $\varepsilon\rightarrow0$.
\begin{description}
 \item[(i)] If $\mu(\varepsilon)/\varepsilon \rightarrow c$, where $0<c<+\infty$, then 
$Y_{\varepsilon}\overset{\mathbb{D}}{\longrightarrow}c^{-1}X^{*}_{c}$ where $X^{*}_{c}$ is a pure jump subordinator
with L{\'e}vy measure given by $d\Lambda^{*}_{c}(s)=1\!\!1_{(0,1]}(s)(c/s)ds$. 
\item[(ii)] If $\mu(\varepsilon)/\varepsilon \rightarrow +\infty$, then 
$Y_{\varepsilon}\overset{\mathbb{D}}{\longrightarrow}{\bf{t}}:=\left\{ t, t\in [0,1]\right\}$.
 \end{description}
\end{proposition}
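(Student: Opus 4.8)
The plan is to reduce the functional convergence to convergence of the one-dimensional marginals at time $1$, exactly as in the proof of Proposition~\ref{Re1-ES}. Each $Y_{\varepsilon}$ is itself a subordinator (a scaled sum of restricted small jumps), and both candidate limits are L\'evy processes: in (i) the subordinator $c^{-1}X^{*}_{c}$, and in (ii) the degenerate deterministic process $\mathbf{t}$. Hence, by \cite[Corollary~3.6]{JS}, it suffices to show $Y_{\varepsilon}(1)\overset{\mathcal{L}}{\longrightarrow}c^{-1}X^{*}_{c}(1)$ and $Y_{\varepsilon}(1)\overset{\mathcal{L}}{\longrightarrow}1$, respectively. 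Since all these random variables are nonnegative, the continuity theorem for Laplace transforms reduces matters further to the pointwise convergence, for each fixed $u>0$, of the Laplace exponents
\[
\psi_{\varepsilon}(u):=-\log\mathbb{E}\big(e^{-uY_{\varepsilon}(1)}\big)=\int_{(0,\varepsilon]}\big(1-e^{-su/\mu(\varepsilon)}\big)\,d\Lambda(s).
\]

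The key device I would use is to integrate against the finite measure $dM(s):=s\,d\Lambda(s)$, whose distribution function is $M(s)=\mu(s)$; this is legitimate because $\int_{(0,\varepsilon]}s\,d\Lambda(s)=\mu(\varepsilon)<\infty$ by (\ref{Mai31eq4}). Rewriting $\psi_{\varepsilon}(u)=\int_{(0,\varepsilon]}\frac{1-e^{-su/\mu(\varepsilon)}}{s}\,dM(s)$ converts the hypotheses on $\mu(\varepsilon)/\varepsilon$ directly into statements about $M$. For part (i) I would then substitute $s=\varepsilon r$ and push $M$ forward to the measure $\nu_{\varepsilon}$ on $(0,1]$ determined by $\nu_{\varepsilon}((0,r])=\mu(\varepsilon r)/\varepsilon$. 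The assumption $\mu(t)/t\to c$ gives $\nu_{\varepsilon}((0,r])=r\,\mu(\varepsilon r)/(\varepsilon r)\to cr$ for every $r$, so the $\nu_{\varepsilon}$ have bounded total mass $\mu(\varepsilon)/\varepsilon\to c$ and converge weakly on $[0,1]$ to $c$ times Lebesgue measure, with no mass escaping to the origin. Because $\varepsilon u/\mu(\varepsilon)\to u/c$, the integrands $r\mapsto\frac{1-e^{-r\varepsilon u/\mu(\varepsilon)}}{r}$ converge uniformly on $(0,1]$ to $\frac{1-e^{-ru/c}}{r}$ (both extend continuously to $r=0$); combining uniform convergence of the integrand with weak convergence of $\nu_{\varepsilon}$ yields
\[
\psi_{\varepsilon}(u)\longrightarrow c\int_{(0,1]}\frac{1-e^{-ru/c}}{r}\,dr=\int_{(0,1]}\big(1-e^{-su/c}\big)\frac{c}{s}\,ds,
\]
which is exactly the Laplace exponent of $c^{-1}X^{*}_{c}(1)$.

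For part (ii) the argument is simpler. Since $\mu(\varepsilon)/\varepsilon\to+\infty$ forces $su/\mu(\varepsilon)\le\varepsilon u/\mu(\varepsilon)\to0$ uniformly over $s\in(0,\varepsilon]$, I would Taylor expand around $0$. The leading term is $\frac{u}{\mu(\varepsilon)}\int_{(0,\varepsilon]}s\,d\Lambda(s)=\frac{u}{\mu(\varepsilon)}\,\mu(\varepsilon)=u$, and the remainder is controlled by $|1-e^{-x}-x|\le x^{2}/2$, giving
\[
\big|\psi_{\varepsilon}(u)-u\big|\le\frac{u^{2}}{2\mu(\varepsilon)^{2}}\int_{(0,\varepsilon]}s^{2}\,d\Lambda(s)\le\frac{u^{2}\varepsilon}{2\mu(\varepsilon)}\longrightarrow0,
\]
where I used $s^{2}\le\varepsilon s$ on $(0,\varepsilon]$. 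Hence $\psi_{\varepsilon}(u)\to u$, i.e.\ $Y_{\varepsilon}(1)\overset{\mathcal{L}}{\longrightarrow}1=\mathbf{t}(1)$.

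The Laplace computation and the Taylor estimate in (ii) are routine; the delicate step is the interchange of limit and integral in part (i). The difficulty is that $\Lambda$ may carry infinite mass near $0$, so the passage to the limit cannot be performed against $\Lambda$ itself but must be run at the level of the finite measures $M$ (equivalently $\nu_{\varepsilon}$), and one must verify that no mass of $\nu_{\varepsilon}$ concentrates at the origin in the limit. The bound $\nu_{\varepsilon}((0,r])\to cr\to0$ as $r\to0$ supplies precisely this uniform control near $0$ and is the crux of the proof.
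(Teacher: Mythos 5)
Your proof is correct, but there is nothing in the paper to compare it against: the authors do not prove this proposition, they state it as a result ``proved in \cite{covo}''. Your argument therefore supplies what the paper omits, and it does so with the paper's own toolkit: exactly as in the proof of Proposition \ref{TVYconverge}, you use the fact that the $Y_{\varepsilon}$ and both candidate limits are L\'evy processes, invoke \cite[Corollary~3.6]{JS} to reduce Skorohod convergence to convergence in law of $Y_{\varepsilon}(1)$, and then check convergence of Laplace exponents. Both computations are sound. In (ii), the bound $|1-e^{-x}-x|\le x^{2}/2$ together with $s^{2}\le\varepsilon s$ on $(0,\varepsilon]$ gives $|\psi_{\varepsilon}(u)-u|\le u^{2}\varepsilon/(2\mu(\varepsilon))\rightarrow 0$, so $Y_{\varepsilon}(1)\rightarrow 1$ in law. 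In (i), passing to the finite measures $\nu_{\varepsilon}$ with $\nu_{\varepsilon}((0,r])=\mu(\varepsilon r)/\varepsilon\rightarrow cr$ is the right move, since $\Lambda$ itself may carry infinite mass near the origin; the integrands $g_{\varepsilon}(r)=(1-e^{-r\varepsilon u/\mu(\varepsilon)})/r$ satisfy $\sup_{0<r\le 1}|g_{\varepsilon}(r)-g(r)|\le|\varepsilon u/\mu(\varepsilon)-u/c|\rightarrow 0$ and extend continuously to $r=0$, so weak convergence of $\nu_{\varepsilon}$ to $c$ times Lebesgue measure on $[0,1]$ (pointwise convergence of the distribution functions to a continuous limit plus convergence of total masses) yields $\psi_{\varepsilon}(u)\rightarrow\int_{(0,1]}(1-e^{-su/c})(c/s)\,ds$, which is indeed the Laplace exponent of $c^{-1}X^{*}_{c}(1)$. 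The only thing your self-contained route does not recover is the additional content of \cite{covo}, where the marginal law of $X^{*}_{c}$ is identified as a Dickman-type distribution and more general normalizations are treated; for the purposes of this paper (weak convergence feeding into the uniform tightness results), your proof is fully adequate.
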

\begin{remark}
 Since $Y_{\varepsilon}$ are L{\'e}vy processes and the limit process in the statement (ii) is continuous, then
 it follows from \cite[Theorem 19]{pollard} that the convergence holds also in $(\mathbb{D},\mathcal{U})$ as follows
\begin{description}
 \item[(ii)']If $\mu(\varepsilon)/\varepsilon \rightarrow +\infty$, then 
$Y_{\varepsilon}\overset{\mathbb{C}}{\longrightarrow}{\bf{t}}$.
\end{description}
\end{remark}

We give some examples of L{\'e}vy processes which illustrate the above proposition. 
 \begin{enumerate}
  \item Gamma process, $\mu(\varepsilon)/\varepsilon \rightarrow 1$.

\item Stable and tempered stable subordinators, $\alpha \in (0,1)$, $\mu(\varepsilon)/\varepsilon \rightarrow +\infty$.

\end{enumerate}

\subsection{Weak convergence in $(\mathbb{D},\mathcal{U})$}

In this subsection we are interested in the weak convergence 
of certain renormalizations of L{\'e}vy processes.
Let $X$ be a L{\'e}vy process with characteristic function of the form 
\[
\mathbb{E}(e^{iuX(t)})= \exp \left(t\left(i b u-\frac{1}{2}cu^{2}+
\int_{-\infty}^{+\infty}(e^{ius}-1-ius\,1\!\!1_{\{ \left|s\right|<1\} }(s))\, 
d\Lambda(s)\right)\right)
\]
where $t\in[0,1]$, $u\in \mathbb{R}$ and the L{\'e}vy measure $\Lambda$ does not have atoms in some neighbourhood of the origin.  
For each $\varepsilon\in (0,1)$, let us consider $\tilde{X}_{\varepsilon}$ the compensated sum 
 of jumps of $X$ taking values in $(-\varepsilon,\varepsilon)$. It is well known that $\{\tilde{X}_{\varepsilon},\, 0<\varepsilon\leq 1\}$ is a family of L{\'e}vy processes with characteristic function
\[
\mathbb{E}(e^{iu\tilde{X}_{\varepsilon}(t)})=\exp\left(t\int_{|s|\leq \varepsilon}(e^{ius}-1-ius)\, d\Lambda(s)\right),\quad t\in[0,1].\]
It is clear that, for each $\varepsilon>0$,  $\tilde{X}_{\varepsilon}$ is a martingale with jumps
 bounded by $\varepsilon$ with $\mathbb{E}(\tilde{X}_{\varepsilon}(1))=0$
and 
\[
 \mathbb{E}(\tilde{X}^{2}_{\varepsilon}(1))=\int_{|s|\leq \varepsilon}s^{2}d\Lambda(s)=:\sigma^{2}(\varepsilon).
\]
We consider the renormalization process $\tilde{Y}_{\varepsilon}:=\sigma(\varepsilon)^{-1}\tilde{X}_{\varepsilon}$ and state 
the following convergence result due to  \cite{As-Ro}.
\begin{proposition}\label{21juiprop}
The following  are equivalent
 \begin{enumerate}
  \item $\tilde{Y}_{\varepsilon}~{ \overset{\mathbb{C}}{\longrightarrow}}~W$ as $\varepsilon \rightarrow 0$, 
where $W$ is a standard Brownian motion.
\item $\dfrac{\sigma(\varepsilon)}{\varepsilon}\longrightarrow \infty$ as $\varepsilon \rightarrow 0$.
 \end{enumerate}

\end{proposition}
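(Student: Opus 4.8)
The plan is to prove the two implications separately, both resting on the reduction of functional convergence to one-dimensional convergence that is available because every $\tilde{X}_{\varepsilon}$, hence every $\tilde{Y}_{\varepsilon}$, is a L\'evy process. Writing $\psi_{\varepsilon}$ for the characteristic exponent of $\tilde{Y}_{\varepsilon}(1)$, namely
\[
\psi_{\varepsilon}(u)=\int_{|s|\le\varepsilon}\Bigl(e^{ius/\sigma(\varepsilon)}-1-\tfrac{ius}{\sigma(\varepsilon)}\Bigr)\,d\Lambda(s),
\]
I would first record that, by \cite[Corollary~3.6]{JS} (exactly as in Proposition~\ref{TVYconverge}), convergence of $\tilde{Y}_{\varepsilon}$ in $(\mathbb{D},\mathcal{J}_{1})$ is equivalent to weak convergence of the one-dimensional laws $\tilde{Y}_{\varepsilon}(1)$; and since the only possible limit $W$ has continuous paths, \cite[Theorem~19]{pollard} upgrades $\mathcal{J}_{1}$-convergence to $\mathcal{U}$-convergence, as in the remark recalled above. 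Thus $\tilde{Y}_{\varepsilon}\overset{\mathbb{C}}{\longrightarrow}W$ is equivalent to $\tilde{Y}_{\varepsilon}(1)\overset{\mathcal{L}}{\longrightarrow}N(0,1)$, and the whole proposition reduces to a one-dimensional assertion. I would also note that each $\tilde{Y}_{\varepsilon}(1)$ is centred, so the drift/centring hypotheses in the limit theorems below hold automatically.

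For the implication $(2)\Rightarrow(1)$ I would expand $\psi_{\varepsilon}$. Using $|e^{iy}-1-iy+\tfrac{y^{2}}{2}|\le\tfrac{|y|^{3}}{6}$ with $y=us/\sigma(\varepsilon)$, the quadratic term integrates exactly to $-\tfrac{u^{2}}{2\sigma(\varepsilon)^{2}}\int_{|s|\le\varepsilon}s^{2}\,d\Lambda(s)=-\tfrac{u^{2}}{2}$ by the very definition of $\sigma(\varepsilon)$, while the remainder is controlled by
\[
\frac{|u|^{3}}{6\,\sigma(\varepsilon)^{3}}\int_{|s|\le\varepsilon}|s|^{3}\,d\Lambda(s)\le\frac{|u|^{3}}{6\,\sigma(\varepsilon)^{3}}\,\varepsilon\,\sigma(\varepsilon)^{2}=\frac{|u|^{3}}{6}\,\frac{\varepsilon}{\sigma(\varepsilon)},
\]
where I used $|s|^{3}\le\varepsilon\,s^{2}$ on $\{|s|\le\varepsilon\}$. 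Hypothesis (2), i.e.\ $\varepsilon/\sigma(\varepsilon)\to0$, makes this vanish, so $\psi_{\varepsilon}(u)\to-\tfrac{u^{2}}{2}$ for every $u$; this is the exponent of $N(0,1)=\mathcal{L}(W(1))$, and the reduction above closes the direction.

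The converse $(1)\Rightarrow(2)$ is the substantive part, and I would argue by contraposition through the classical criterion for convergence of infinitely divisible laws to a Gaussian limit (Gnedenko--Kolmogorov; see \cite[Theorem~8.7]{S} or \cite{JS}). Denote by $\nu_{\varepsilon}$ the L\'evy measure of $\tilde{Y}_{\varepsilon}(1)$, namely the image of $\Lambda\big|_{\{|s|\le\varepsilon\}}$ under $s\mapsto s/\sigma(\varepsilon)$. Two features of the limit $N(0,1)$ are necessary: the big jumps must disappear, $\nu_{\varepsilon}(\{|x|>\delta\})\to0$ for every $\delta>0$ (which also follows directly from \emph{uniform} convergence to the continuous process $W$, since then $\sup_{t\le1}|\Delta\tilde{Y}_{\varepsilon}(t)|\to0$ in probability, and the number of jumps exceeding $\delta$ is Poisson with mean $\nu_{\varepsilon}(\{|x|>\delta\})$); and the unit variance must concentrate at the origin,
\[
\lim_{\delta\downarrow0}\;\limsup_{\varepsilon\downarrow0}\;\frac{1}{\sigma(\varepsilon)^{2}}\int_{\delta\sigma(\varepsilon)<|s|\le\varepsilon}s^{2}\,d\Lambda(s)=0.
\]
Rewriting these in terms of $\Lambda$ yields a Lindeberg-type condition at the cut-off $\varepsilon$, and the final task is to convert it into $\sigma(\varepsilon)/\varepsilon\to\infty$.

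I expect this conversion to be the main obstacle, and it is exactly where the standing hypothesis that $\Lambda$ has no atoms in a neighbourhood of the origin enters (this is the analytic core of \cite{As-Ro}). The idea is that if $\sigma(\varepsilon)/\varepsilon$ failed to diverge, then along some sequence $\varepsilon_{n}\downarrow0$ the ratio $\varepsilon_{n}/\sigma(\varepsilon_{n})$ would stay bounded below, so that the band $(\delta\sigma(\varepsilon_{n}),\varepsilon_{n}]$ just inside the cut-off is nonempty for small $\delta$; the absence of atoms then forces $\Lambda$ to deposit a non-negligible amount of $s^{2}$-mass there, contradicting the Lindeberg-type condition above. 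Making this quantitative — controlling how the truncated second moment $\varepsilon\mapsto\sigma(\varepsilon)^{2}$ can vary across scales comparable to $\sigma(\varepsilon)$ — is the delicate point, and I would isolate it as a monotonicity/real-analysis lemma on $\sigma(\cdot)^{2}$ before combining it with the necessary conditions just derived.
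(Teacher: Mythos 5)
First, a point of reference: the paper itself gives no proof of Proposition~\ref{21juiprop}; it is quoted as a result due to \cite{As-Ro} (their Theorem~2.1 and Proposition~2.1). So your attempt must be judged on completeness, not against an in-paper argument. Your proof of $(2)\Rightarrow(1)$ is correct and complete: reducing functional convergence to convergence of $\tilde{Y}_{\varepsilon}(1)$ via \cite[Corollary~3.6]{JS}, and upgrading from $(\mathbb{D},\mathcal{J}_{1})$ to $(\mathbb{D},\mathcal{U})$ by \cite[Theorem 19]{pollard} because $W$ is continuous, is exactly the mechanism the paper uses for Proposition~\ref{TVYconverge} and the remark following it; the estimate $\bigl|\psi_{\varepsilon}(u)+\tfrac{u^{2}}{2}\bigr|\leq\tfrac{|u|^{3}}{6}\,\tfrac{\varepsilon}{\sigma(\varepsilon)}$, coming from $|s|^{3}\leq\varepsilon s^{2}$ on $\{|s|\leq\varepsilon\}$, then settles that implication.

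The genuine gap is the converse $(1)\Rightarrow(2)$, which is the substantive half of the statement, and you do not prove it: you reduce it to an unproven ``monotonicity/real-analysis lemma on $\sigma(\cdot)^{2}$,'' which is precisely Proposition~2.1(ii) of \cite{As-Ro}, i.e.\ the entire analytic content of this direction. Worse, the heuristic you offer for that lemma is wrong as stated. Atomlessness of $\Lambda$ near the origin does \emph{not} force $\Lambda$ to ``deposit a non-negligible amount of $s^{2}$-mass'' in the band $(\delta\sigma(\varepsilon_{n}),\varepsilon_{n}]$ at your bad scales: an atomless L\'evy measure can have gaps in its support near $0$ (think of mass spread in smooth bumps over intervals $[a_{n},2a_{n}]$ with $a_{n}=2^{-2^{n}}$), and then the band just inside the cut-off may carry no mass at all along the sequence $\varepsilon_{n}$, so no contradiction arises at any single scale. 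Any correct argument must exploit the necessary condition $\sigma(\delta\sigma(\varepsilon)\wedge\varepsilon)/\sigma(\varepsilon)\to1$ at \emph{all} scales $\varepsilon$ simultaneously and iterate it: in the bump example, boundedness of $\sigma(\varepsilon_{n})/\varepsilon_{n}$ forces $\sigma(2a_{n})^{2}\leq 4C^{2}a_{n}^{2}$, the necessary condition forces the successive truncated second moments to satisfy $\sigma(a_{n+1})^{2}/\sigma(2a_{n})^{2}\to1$, and only by propagating this near-constancy down infinitely many (doubly exponentially separated) scales does one force $\sigma^{2}$ to vanish identically near $0$, contradicting $\sigma(\varepsilon)>0$; continuity of $\sigma^{2}$ (that is, atomlessness) is what this multi-scale bookkeeping needs to get started and to propagate. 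None of this appears in your proposal, and the single-scale contraposition you sketch would fail on such examples; as it stands you have established only the implication $(2)\Rightarrow(1)$.
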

\begin{remark}
 For each $\varepsilon \in (0,1)$, $\tilde{Y}_{\varepsilon}$ is a L{\'e}vy process with characteristic function given by
\[ 
\mathbb{E}(e^{iu\tilde{Y}_{\varepsilon}(t)})=\exp\left(t\left[iub_{\varepsilon}+
\int_{\mathbb{R}}\left(e^{ius}-1-ius1\!\!1_{\{|s|\leq1\}}(s)\right)\, d\tilde{\Lambda}_{\varepsilon}(s)\right]\right),\, t\in[0,1],
\]
where the L{\'e}vy measure $\tilde{\Lambda}_{\varepsilon}$ is defined, for any $B\in \mathcal{B}(\mathbb{R})$, by
\begin{equation}\label{levreno}
 \tilde{\Lambda}_{\varepsilon}(B):=\Lambda(\sigma(\varepsilon) B\cap (-\varepsilon,\varepsilon)),  
\end{equation}
and 
\begin{equation}
 b_{\varepsilon}:=-\sigma(\varepsilon)^{-1}\int_{\sigma(\varepsilon)\wedge \varepsilon\leq|s|\leq \varepsilon}s\,d\Lambda(s). 
\end{equation}
\end{remark}

We give some examples of L{\'e}vy processes for which the above renormalization converge.  
 \begin{enumerate}
  \item Symmetric $\alpha$-stable processes, $\alpha \in (0,2)$, $\sigma(\varepsilon)=(2/(2-\alpha))^{1/2}\varepsilon^{1-\alpha/2}$. 
\item Tempered stable processes, $\alpha \in (0,1)$, $\sigma(\varepsilon)\geq (2/(2-\alpha))^{1/2}\varepsilon^{1-\alpha/2}e^{-\varepsilon/2}.$  
\item Modified tempered stable processes, $\alpha \in (0,1/2)$, $\sigma(\varepsilon)\approx (2/((2-2\alpha)\pi))^{1/2}\varepsilon^{1-\alpha}.$ 
\item Normal inverse Gaussian, $\sigma(\varepsilon)\approx (2/\pi)^{1/2}\varepsilon^{1/2}.$ 
 \end{enumerate}
We notice that the examples 1.~and 4.~above were considered in \cite{As-Ro}.
\subsection{\large{Uniform tightness of L{\'e}vy processes}}

First we recall the definition and criterion of the uniform tightness (\textbf{UT}) needed later on. 
The following definition was proposed by  \cite{JMP}.  
\begin{definition}\label{defUT}
A sequence of semimartingales $\{Z^{n},\,n\geq 1\}$ is said to be uniformly tight  
if for each $t\in (0,1]$, the set
 \[
  \left\{ \int^{t}_{0} H^{n}(s_{-})\,dZ^{n}(s), H^{n}\in \mathcal{H}, n \geq 1 \right\}
 \]
 is stochastically bounded (uniformly in $n$).
\end{definition}
In the above definition $\mathcal{H}$ denotes the collection of simple predictable processes of the form
\[
H(t)=H_{0}+\sum_{i=1}^{m}H_{i}1\!\! 1_{(t_{i},t_{i+1}]}(t),
\]
where $H_{i}$ is $\mathcal{F}_{t_{i}}$-measurable such that $|H_{i}|\leq 1$ and $0=t_{0}\leq \ldots\leq t_{m+1}=t$
 is a finite partition of $[0,t]$. 

In practice it is not easy to verify the (\textbf{UT}) condition as stated in Definition~\ref{defUT}.   
Thus we look for a more convenient criterion due to  \cite{KP3}. 
Let $Z$ be an adapted process with c{\`a}dl{\`a}g paths and $\{Z^{n}, n\in \mathbb{N}\}$ be a sequence of semimartingales, 
with the canonical decompositions
\begin{equation}
Z^{n}(t)=M^{n}(t)+A^{n}(t),\label{eq1Juin17}
\end{equation}
where $A^{n}$ is a predictable process with locally
bounded variation and $M^{n}$ is a (locally bounded) local martingale.
\begin{proposition}\label{KPUT}[cf. \cite{KP3}]
 Assume that $Z^{n}\overset{\mathbb{D}}{\longrightarrow}Z$ and one of the following two conditions holds 

\begin{equation}\label{UCV1}
 \sup_{n\in \mathbb{N}}\left\{\mathbb{E}\left([M^{n},M^{n}](1)+\int_{0}^{1}|dA^{n}(t)|\right)\right\}<+\infty, 
\end{equation}
\vspace{0.1cm}
\begin{equation}\label{UCV2}
 \sup_{n\in \mathbb{N}}\left\{\mathbb{E}\left(\sup_{t\leq 1}|\Delta M^{n}(t)|+\int_{0}^{1}|dA^{n}(t)|\right)\right\}<+\infty. 
\end{equation}

Then $\{Z^{n}, n\in \mathbb{N}\}$ satisfies (\textbf{UT}). 
\end{proposition}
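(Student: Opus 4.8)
The plan is to verify directly the stochastic boundedness required by Definition~\ref{defUT}, splitting the integral against $Z^n$ according to the canonical decomposition \eqref{eq1Juin17} and treating the finite-variation and local-martingale parts separately. Fix $t\in(0,1]$ and a simple predictable $H^n\in\mathcal{H}$ with $|H^n|\le 1$, and write
\[
\int_0^t H^n(s_-)\,dZ^n(s)=\int_0^t H^n(s_-)\,dM^n(s)+\int_0^t H^n(s_-)\,dA^n(s)=:N^n(t)+B^n(t).
\]
Since stochastic boundedness is preserved under finite sums, it suffices to bound $\{N^n\}$ and $\{B^n\}$ uniformly in $n$ and in $H^n$.

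The finite-variation term is immediate: because $|H^n|\le 1$ we have $\sup_{t\le 1}|B^n(t)|\le \int_0^1 |dA^n(s)|$, whose expectation is bounded uniformly in $n$ under either \eqref{UCV1} or \eqref{UCV2}. Markov's inequality then gives $\sup_n P(\sup_{t\le1}|B^n(t)|>K)\le K^{-1}\sup_n \mathbb{E}\int_0^1|dA^n|\to0$ as $K\to\infty$, i.e. $\{B^n\}$ is stochastically bounded uniformly in $n$ and $H^n$.

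For the martingale term under \eqref{UCV1}, note that $N^n$ is itself a local martingale whose bracket satisfies $[N^n,N^n]_t=\int_0^t (H^n(s_-))^2\,d[M^n,M^n]_s\le [M^n,M^n]_t$, using $|H^n|\le1$. Hence $\mathbb{E}[N^n,N^n]_1\le \mathbb{E}[M^n,M^n]_1\le C$ uniformly, so each $N^n$ is a genuine $L^2$-martingale, and Doob's inequality yields $\mathbb{E}\bigl(\sup_{t\le1}N^n(t)^2\bigr)\le 4\,\mathbb{E}[N^n,N^n]_1\le 4C$. Chebyshev's inequality then gives the required uniform stochastic boundedness, with a bound depending only on $C$ (not on $H^n$).

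The main obstacle is the martingale term under the maximal-jump condition \eqref{UCV2}, where no a priori bound on $[M^n,M^n]$ is available. Here I would first exploit the hypothesis $Z^n\overset{\mathbb{D}}{\longrightarrow}Z$: weak convergence in $(\mathbb{D},\mathcal{J}_{1})$ makes $\{Z^n\}$ tight, hence $\sup_{t\le1}|Z^n(t)|$ is stochastically bounded, and combined with the bound on $\int_0^1|dA^n|$ this shows $\sup_{t\le1}|M^n(t)|$ is stochastically bounded as well. I would then localise: for a level $R$ set $S_R^n:=\inf\{t:|M^n(t)|\ge R\}$, so that $|M^n|$ stays below $R$ on $[0,S_R^n)$ while the overshoot at $S_R^n$ is at most $\sup_{t\le1}|\Delta M^n(t)|$, the quantity controlled in \eqref{UCV2}; peeling the finitely many jumps exceeding a fixed threshold into the finite-variation part leaves a bounded-jump martingale whose stopped bracket is controlled in terms of $R$ and the (integrable) maximal jump, reducing the estimate to the $L^2$/Doob argument of the previous paragraph. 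The delicate step, which I expect to require the most care, is precisely this transfer: turning the maximal-jump bound together with the tightness of $\{M^n\}$ into a uniform weak-$L^1$ bound (via a Lenglart-type domination inequality) on the stopped stochastic integral, so that the contribution of the event $\{S_R^n\le1\}$ is made uniformly small by taking $R$ large. This is exactly the content of the Kurtz--Protter criterion \cite{KP3}, which one may alternatively invoke directly.
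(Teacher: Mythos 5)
The paper never proves this proposition: it is imported verbatim from Kurtz and Protter \cite{KP3}, and the remark that follows it in the paper only records that (\ref{UCV1}) and (\ref{UCV2}) imply the (\textbf{UCV}) condition of \cite{KP3}, which under $Z^{n}\overset{\mathbb{D}}{\longrightarrow}Z$ is equivalent to (\textbf{UT}). So the comparison here is with the literature rather than with an argument in the paper. Your handling of the finite-variation part, and of the martingale part under (\ref{UCV1}), is correct and complete: for $|H^{n}|\le 1$ one has $[N^{n},N^{n}]_{1}\le[M^{n},M^{n}]_{1}$, a local martingale vanishing at $0$ with integrable bracket is a genuine square-integrable martingale, and Doob plus Chebyshev give a bound $4C/K^{2}$ uniform in $n$ and in $H^{n}$. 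That half is a self-contained proof which the paper does not supply.

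The gap is in the case (\ref{UCV2}), and it is genuine: two of the steps you sketch would fail as stated. First, ``peeling the finitely many jumps exceeding a fixed threshold into the finite-variation part'' is not available, because (\ref{UCV2}) controls only $\mathbb{E}\left(\sup_{t\le1}|\Delta M^{n}(t)|\right)$, i.e.\ the largest jump; a path may have arbitrarily many jumps of size just above the threshold, so the peeled process $\sum_{s\le t}\Delta M^{n}(s)1\!\!1_{\{|\Delta M^{n}(s)|>\delta\}}$ has no uniform variation bound (and the compensator needed to keep the remainder a martingale costs just as much variation). Second, the announced reduction ``to the $L^{2}$/Doob argument'' cannot work: after stopping at $S^{n}_{R}$ the stopped martingale $M^{n,S^{n}_{R}}:=M^{n}(\cdot\wedge S^{n}_{R})$ is dominated by $R+\sup_{t\le1}|\Delta M^{n}(t)|$, which is only in $L^{1}$, so its bracket need not be integrable and square-integrability is out of reach. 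The correct completion of your localisation is the $L^{1}$ Burkholder--Davis--Gundy (Davis) inequality used in both directions on the stopped martingale:
\[
\mathbb{E}\Bigl(\sup_{s\le1}\Bigl|\int_{0}^{s}H^{n}(u_{-})\,dM^{n,S^{n}_{R}}(u)\Bigr|\Bigr)
\le c\,\mathbb{E}\bigl([M^{n,S^{n}_{R}},M^{n,S^{n}_{R}}]_{1}^{1/2}\bigr)
\le c\,c'\,\mathbb{E}\Bigl(\sup_{t\le1}|M^{n,S^{n}_{R}}(t)|\Bigr)
\le c\,c'\,(R+C),
\]
after which
\[
P\Bigl(\sup_{s\le t}\Bigl|\int_{0}^{s}H^{n}(u_{-})\,dM^{n}(u)\Bigr|>K\Bigr)\le P(S^{n}_{R}\le1)+\frac{c\,c'(R+C)}{K},
\]
and $P(S^{n}_{R}\le1)$ is made small uniformly in $n$ exactly as you argue (tightness of $\{Z^{n}\}$ plus the bound on $\int_{0}^{1}|dA^{n}|$), then $K$ is taken large. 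As written, however, your treatment of (\ref{UCV2}) ends by invoking \cite{KP3}, i.e.\ the very result being proved, so that half of the proposition is not established by your argument.
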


\begin{remark}
\begin{enumerate}
\item If $Z$ is a continuous semimartingale then we assume that $Z^{n}\overset{\mathbb{C}}{\longrightarrow}Z$. 
 \item The conditions (\ref{UCV1}) and (\ref{UCV2}) imply the uniform controlled variation (\textbf{UCV}) of $\{Z^{n}, n\in \mathbb{N}\}$
 introduced in \cite{KP3}. 
\item Since $Z^{n}\overset{\mathbb{D}}{\longrightarrow}Z$ then the (\textbf{UT}) and (\textbf{UCV}) are equivalent, see \cite{KP3}.
\end{enumerate}

\end{remark}

Next, we are interested in the decomposition (\ref{eq1Juin17}) for a L{\'e}vy process $Z$. 
We start by splitting $Z$ into two parts depending on the size of the jumps:
\[
Z(t)=R(t)+N(t)
\]
with $N(t)=\sum_{s\leq t}\Delta Z(s)~1\!\!1_{\left\lbrace\left|\Delta Z(s)\right|>1\right\rbrace}$
and $R$ with jumps bounded by $1$. Since $R$ is a L{\'e}vy process with bounded jumps its canonical decomposition
is, by means of \cite[pp.~103]{A}, of the simple form $R(t)=R_{0}(t)+t\mathbb{E}(R(1))$
where $\{ R_{0}(t):t\in\left[0,1\right]\}$ is a c{\`a}dl{\`a}g centred
square-integrable martingale with jumps bounded by $1$. Hence the decomposition (\ref{eq1Juin17}) takes the form 
\begin{equation}
 Z(t)=R_{0}(t)+t\mathbb{E}(R(1))+\sum_{s\leq t}\Delta Z(s)~1\!\!1_{\{\left|\Delta Z(s)\right|>1\}}.
\label{Aout13eq1}
\end{equation}

Now we are able to state the main result of this subsection.
\begin{proposition}\label{prop1sept20}

 The following families  satisfy (\textbf{UT})
\begin{description}
\item[(i)] $\{ X_{\alpha}^{TSS},\, \alpha \in (0,1/2) \},$ 
\item[(ii)] $ \{ X_{\alpha}^{MTS},\, \alpha \in (0,1/2) \}.$
\end{description}
\end{proposition}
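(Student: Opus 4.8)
\section*{Proof proposal}

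The plan is to invoke the criterion of Proposition~\ref{KPUT}. The weak convergence hypothesis is already in hand: by Proposition~\ref{TVYconverge} we have $X_\alpha^{TSS}\overset{\mathbb{D}}{\longrightarrow}X_\gamma$ and $X_\alpha^{MTS}\overset{\mathbb{D}}{\longrightarrow}X^{NIG}$, so it remains only to exhibit one of the bounds (\ref{UCV1}) or (\ref{UCV2}), uniformly over $\alpha\in(0,1/2)$. I would verify (\ref{UCV1}). For this I read off the canonical decomposition (\ref{Aout13eq1}): for each member $Z$ of either family, $M=R_0$ is the compensated sum of the jumps of size at most $1$, and the finite-variation part is $A(t)=t\,\mathbb{E}(R(1))+\sum_{s\le t}\Delta Z(s)\mathbf 1_{\{|\Delta Z(s)|>1\}}$. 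Since these processes are pure jump with no Gaussian component, the quantities entering (\ref{UCV1}) are computed from the L\'evy measure alone:
\[
\mathbb{E}\big([M,M](1)\big)=\int_{|s|\le1}s^{2}\,d\Lambda(s),\qquad
\mathbb{E}\Big(\int_0^1|dA(t)|\Big)\le|\mathbb{E}(R(1))|+\int_{|s|>1}|s|\,d\Lambda(s),
\]
with $\mathbb{E}(R(1))=\int_{|s|\le1}s\,d\Lambda(s)$. The task thus reduces to showing these integrals are bounded uniformly in $\alpha$.

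For the subordinator family $X_\alpha^{TSS}$ all jumps are positive and $d\Lambda_\alpha^{TSS}(s)=\Gamma(1-\alpha)^{-1}e^{-s}s^{-1-\alpha}\mathbf 1_{\{s>0\}}ds$. I would bound $\int_0^1 s^{2}\,d\Lambda_\alpha^{TSS}(s)\le \Gamma(1-\alpha)^{-1}\int_0^1 s^{1-\alpha}ds$, then $\int_0^1 s\,d\Lambda_\alpha^{TSS}(s)\le\Gamma(1-\alpha)^{-1}\int_0^1 s^{-\alpha}ds$, and $\int_1^\infty s\,d\Lambda_\alpha^{TSS}(s)\le\Gamma(1-\alpha)^{-1}\int_1^\infty e^{-s}ds$, using $e^{-s}\le1$ on $(0,1]$. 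Each elementary integral is at most $2$ for $\alpha\in(0,1/2)$, while $\Gamma(1-\alpha)\ge1$ there, so all three quantities are uniformly bounded and (\ref{UCV1}) holds.

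For $X_\alpha^{MTS}$ the process has finite variation, but the estimate is more delicate, and this is where the main obstacle lies. Using the Bessel asymptotics $K_{\alpha+1/2}(s)\sim\tfrac12\Gamma(\alpha+\tfrac12)(s/2)^{-\alpha-1/2}$ as $s\to0$, one finds $d\Lambda_\alpha^{MTS}(s)\sim C_\alpha|s|^{-1-2\alpha}ds$, so the small-jump total variation $\int_{|s|\le1}|s|\,d\Lambda_\alpha^{MTS}(s)$ behaves like $(1-2\alpha)^{-1}$ and \emph{blows up} as $\alpha\to1/2$. Hence one cannot place the small jumps in the finite-variation part; the compensated decomposition (\ref{Aout13eq1}) is essential. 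The resolution is symmetry: the MTS L\'evy measure is even, so $\mathbb{E}(R(1))=\int_{|s|\le1}s\,d\Lambda_\alpha^{MTS}(s)=0$ and the divergent quantity never appears. What enters (\ref{UCV1}) instead is $\int_{|s|\le1}s^{2}\,d\Lambda_\alpha^{MTS}(s)$, which near zero behaves like $\int_0^1 s^{1-2\alpha}ds=(2-2\alpha)^{-1}$ and so stays bounded for $\alpha\in(0,1/2)$, together with $\int_{|s|>1}|s|\,d\Lambda_\alpha^{MTS}(s)$, controlled by the exponential decay $K_{\alpha+1/2}(s)\sim\sqrt{\pi/(2s)}\,e^{-s}$ at infinity. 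I would only need to check that the constants $C_\alpha$, which involve $\Gamma(\alpha+\tfrac12)$ and $2^{\alpha+1/2}$, remain bounded on $(0,1/2)$, which is clear by continuity, and that the tail integrals are dominated uniformly by $\int_1^\infty e^{-s}ds$. With these uniform bounds established, (\ref{UCV1}) holds for the MTS family as well, and Proposition~\ref{KPUT} yields (\textbf{UT}) in both cases.
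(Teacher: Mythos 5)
Your proposal follows exactly the paper's route: weak convergence (Proposition~\ref{TVYconverge}) combined with criterion (\ref{UCV1}) of Proposition~\ref{KPUT}, applied through the decomposition (\ref{Aout13eq1}), with the same reduction to L\'evy-measure integrals and the same key observation that the symmetry of $\Lambda^{MTS}_{\alpha}$ forces $\mathbb{E}(R^{MTS}_{\alpha}(1))=0$. Your TSS estimates are complete and rigorous (the paper dismisses that case as ``simple to verify''), so part (i) needs no comment.

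The one real divergence is how the two MTS integrals are bounded uniformly in $\alpha$, and there your argument as written has a gap. The paper substitutes the integral representation (\ref{Aout26eq2}) into both integrals and evaluates them by Fubini, obtaining closed-form bounds (e.g.\ $2\int_{0}^{\infty}s^{2}\,d\Lambda^{MTS}_{\alpha}(s)=\sqrt{\pi}\,2^{1/2-\alpha}\,\Gamma(1-\alpha)$) whose uniform boundedness on $(0,1/2)$ is immediate. You instead invoke asymptotics of $K_{\alpha+1/2}$ at $0$ and at $\infty$; but a pointwise asymptotic equivalence (fixed $\alpha$, $s\to0$ or $s\to\infty$) does not by itself control an integral over all of $(0,1]$ or $(1,\infty)$ uniformly in $\alpha$, and ``clear by continuity'' addresses only the constants $C_{\alpha}$, not the $(s,\alpha)$-uniformity of the error in the equivalence --- which is precisely what is at stake, since the small-jump exponent $-1-2\alpha$ approaches the critical value $-2$ as $\alpha\to 1/2$. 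The repair is short and uses the very representation the paper works with: the change of variables $t\mapsto s^{2}/(4t)$ in (\ref{Aout26eq2}) yields the global inequality $K_{\nu}(s)\le\frac{1}{2}\Gamma(\nu)(2/s)^{\nu}$ for all $s>0$, which turns your small-jump estimate into the genuine bound
\[
\int_{|s|\le1}s^{2}\,d\Lambda^{MTS}_{\alpha}(s)\le\frac{2^{\alpha+1/2}\,\Gamma(\alpha+\frac{1}{2})}{\pi(2-2\alpha)},
\]
bounded uniformly on $(0,1/2)$; for the tail, monotonicity of $K_{\nu}$ in the order $\nu$ gives
\[
\int_{|s|>1}|s|\,d\Lambda^{MTS}_{\alpha}(s)=\frac{2}{\pi}\int_{1}^{\infty}s^{1/2-\alpha}K_{\alpha+1/2}(s)\,ds\le\frac{2}{\pi}\int_{1}^{\infty}s^{1/2}K_{1}(s)\,ds<\infty,
\]
an $\alpha$-free bound. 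With these two lines your proof is complete. Finally, your observation that $\int_{|s|\le1}|s|\,d\Lambda^{MTS}_{\alpha}(s)$ blows up like $(1-2\alpha)^{-1}$ as $\alpha\to1/2$, so that the compensated decomposition is forced rather than merely convenient, is left implicit in the paper and is worth stating explicitly.
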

\begin{proof}

Since the families $\{ X_{\alpha}^{TSS},\, \alpha \in (0,1/2) \}$ and 
$\{ X_{\alpha}^{MTS},\, \alpha \in (0,1/2) \}$ are weakly convergent, then 
in order to obtain the (\textbf{UT}) property, 
we have only to check condition (\ref{UCV1}) of Proposition \ref{KPUT}.

(i) The decomposition (\ref{Aout13eq1}) for the process $X_{\alpha}^{TSS}$ 
is given by
\begin{equation}
X_{\alpha}^{TSS}(t)=R_{\alpha,0}^{TSS}(t)+t\mathbb{E}(R_{\alpha}^{TSS}(1))
+\sum_{s\leq t}\Delta X_{\alpha}^{TSS}1\!\!1_{\{\left|\Delta X_{\alpha}^{TSS}\right|>1\}}.
\end{equation}
Thus, condition (\ref{UCV1}) becomes
\[\sup_{\alpha \in (0,1/2)}\left(\int_{0}^{1}s^{2}~d\Lambda^{TSS}_{\alpha}(s)+\int_{0}^{+\infty}s~d\Lambda^{TSS}_{\alpha}(s)\right)
<+\infty,
\]
which is simple to verify.

(ii)  It is easy to see that $\mathbb{E}(R_{\alpha}^{MTS}(1))=0$. 
Then the (\textbf{UT}) condition follows from 
\[\sup_{\alpha \in (0,1/2)}\left(\int_{|s|\leq 1}s^{2}~d\Lambda^{MTS}_{\alpha}(s)+\int_{|s|>1}|s|~d\Lambda^{MTS}_{\alpha}(s)\right)
<+\infty.
\]
To show this we use the integral representation (\ref{Aout26eq1}) for the Bessel function $K_{\alpha+1/2}$ and estimate the above integrals as
\begin{eqnarray*}
\int_{|s|> 1}|s|~d\Lambda^{MTS}_{\alpha}(s)&= & 2^{-\alpha-1/2}\int_{1}^{+\infty}\int_{0}^{+\infty}s~e^{-\frac{s^{2}}{4t}}
e^{-t}t^{-(\alpha+3/2)}dtds \\
&=&2^{1/2-\alpha}\int_{0}^{+\infty}e^{-(t+\frac{1}{4t})}t^{-\alpha-1/2}dt\\
&\leq& 5~ 2^{1/2-\alpha}\int_{1/4}^{+\infty}e^{-(t+\frac{1}{4t})}dt.
\end{eqnarray*}

\begin{eqnarray*}
\int_{|s|\leq 1}s^{2}~d\Lambda^{MTS}_{\alpha}(s)&\leq & 2\int_{0}^{+\infty}s^{2}~d\Lambda^{MTS}_{\alpha}(s) \\
&=&\sqrt{\pi}~ 2^{1/2-\alpha}~ \Gamma(1-\alpha).
\end{eqnarray*}
 This completes the proof.
\end{proof}
Next we state the (\textbf{UT}) property for the renormalized families
$\{ Y_{\varepsilon},\, \varepsilon \in (0,1) \}$ and $\{ \tilde{Y}_{\varepsilon},\, \varepsilon \in (0,1) \}$.
\begin{proposition}\label{prop2sept20}
\begin{description}
 \item [(i)] Assume that $\mu(\varepsilon)/\varepsilon$ converges in $(0,+\infty]$. Then the 
renormalized family $\{ Y_{\varepsilon},\, \varepsilon \in (0,1) \}$ satisfies (\textbf{UT}).
\item[(ii)] Assume that $\tilde{Y}_{\varepsilon}{ \overset{\mathbb{C}}{\longrightarrow}}~W$. 
Then the renormalized family $\{ \tilde{Y}_{\varepsilon},\, \varepsilon \in (0,1) \}$ satisfies (\textbf{UT}).
\end{description} 
\end{proposition}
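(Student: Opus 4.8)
The plan is to reduce both assertions to the verification of condition (\ref{UCV1}) in Proposition~\ref{KPUT}, because in each case the weak convergence hypothesis required there is already at hand. For (i), the result of \cite{covo} recalled above shows that, under $\mu(\varepsilon)/\varepsilon\to c\in(0,+\infty]$, the family $\{Y_\varepsilon\}$ converges in $(\mathbb{D},\mathcal{J}_1)$ (and, when $c=+\infty$, in $(\mathbb{D},\mathcal{U})$ towards the continuous limit $\mathbf{t}$, so that the continuity requirement of the first item of the remark after Proposition~\ref{KPUT} is met). For (ii), the assumption $\tilde Y_\varepsilon\overset{\mathbb{C}}{\longrightarrow}W$ is precisely the $\mathbb{C}$-convergence to the continuous limit $W$ demanded by that same remark. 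Hence it remains only to exhibit the canonical decomposition $M^\varepsilon+A^\varepsilon$ of each renormalized process and to bound, uniformly in $\varepsilon\in(0,1)$, the expression in (\ref{UCV1}).

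For (i) I would first note that $Y_\varepsilon=\mu(\varepsilon)^{-1}X_\varepsilon$ is again a driftless subordinator and that the normalization forces $\mathbb{E}(Y_\varepsilon(1))=\mu(\varepsilon)^{-1}\mu(\varepsilon)=1$. Its canonical decomposition is therefore $A^\varepsilon(t)=t$ and $M^\varepsilon(t)=Y_\varepsilon(t)-t$ (the compensated subordinator), so that $\int_0^1|dA^\varepsilon(t)|=1$ for every $\varepsilon$. Since the L\'evy measure of $Y_\varepsilon$ is the image of $\Lambda|_{(0,\varepsilon]}$ under the scaling $s\mapsto s/\mu(\varepsilon)$ and $M^\varepsilon$ is a pure-jump martingale, one has
\[
\mathbb{E}\big([M^\varepsilon,M^\varepsilon](1)\big)=\mu(\varepsilon)^{-2}\int_{(0,\varepsilon]}s^2\,d\Lambda(s).
\]
The decisive estimate is that the jumps of $X_\varepsilon$ are bounded by $\varepsilon$, which gives $\int_{(0,\varepsilon]}s^2\,d\Lambda(s)\le\varepsilon\int_{(0,\varepsilon]}s\,d\Lambda(s)=\varepsilon\mu(\varepsilon)$ and hence $\mathbb{E}([M^\varepsilon,M^\varepsilon](1))\le\varepsilon/\mu(\varepsilon)$.

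It then remains to check that $\sup_{\varepsilon\in(0,1)}\varepsilon/\mu(\varepsilon)<+\infty$, and this is where the only real care is needed: the hypothesis controls $\varepsilon/\mu(\varepsilon)$ only as $\varepsilon\to0$ (where it converges in $[0,+\infty)$ to $1/c$, with $1/c:=0$ if $c=+\infty$), so that the ratio is bounded on a neighbourhood of $0$; on the complementary interval $[\delta,1)$ I would use that $\mu$ is nondecreasing and strictly positive (a consequence of (\ref{Mai31eq3})--(\ref{Mai31eq4})) to bound $\varepsilon/\mu(\varepsilon)\le 1/\mu(\delta)$. Together with $\int_0^1|dA^\varepsilon|=1$ this yields (\ref{UCV1}), and Proposition~\ref{KPUT} gives the (\textbf{UT}) property.

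For (ii) the argument is immediate by the very design of the renormalization. The process $\tilde Y_\varepsilon=\sigma(\varepsilon)^{-1}\tilde X_\varepsilon$ is already a centred martingale, so one takes $M^\varepsilon=\tilde Y_\varepsilon$ and $A^\varepsilon\equiv 0$, whence $\int_0^1|dA^\varepsilon|=0$; moreover
\[
\mathbb{E}\big([M^\varepsilon,M^\varepsilon](1)\big)=\sigma(\varepsilon)^{-2}\int_{|s|\le\varepsilon}s^2\,d\Lambda(s)=\sigma(\varepsilon)^{-2}\sigma^2(\varepsilon)=1.
\]
Thus (\ref{UCV1}) holds with supremum equal to $1$, and, the limit $W$ being continuous, Proposition~\ref{KPUT} together with the assumed $\mathbb{C}$-convergence yields (\textbf{UT}). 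The main obstacle is therefore confined to part (i), namely passing from the pointwise control of $\varepsilon/\mu(\varepsilon)$ near the origin to a bound uniform over the whole range $(0,1)$.
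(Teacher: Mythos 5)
Your proof is correct, but in both parts it takes a genuinely different route from the paper's. For (i) the paper does not compensate at all: since $Y_{\varepsilon}$ is an increasing pure-jump process of finite variation, it puts the whole process into the finite-variation part ($M^{\varepsilon}=0$, $A^{\varepsilon}=Y_{\varepsilon}$), so that condition (\ref{UCV1}) reads $\sup_{\varepsilon}\mathbb{E}\left(\int_{0}^{1}|dY_{\varepsilon}(t)|\right)=\sup_{\varepsilon}\mathbb{E}\left(Y_{\varepsilon}(1)\right)=1$, an exact identity forced by the normalization; no estimate of $\int_{(0,\varepsilon]}s^{2}\,d\Lambda(s)$ and no uniform control of $\varepsilon/\mu(\varepsilon)$ are needed, the hypothesis on $\mu(\varepsilon)/\varepsilon$ entering only through the weak-convergence prerequisite of Proposition \ref{KPUT}. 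Your compensated decomposition $M^{\varepsilon}(t)=Y_{\varepsilon}(t)-t$, $A^{\varepsilon}(t)=t$ costs you the quadratic-variation bound and the two-regime argument for $\sup_{\varepsilon\in(0,1)}\varepsilon/\mu(\varepsilon)$, but it is the canonical decomposition, i.e.\ it literally satisfies the predictability requirement on $A^{\varepsilon}$ in the statement of Proposition \ref{KPUT}, which the paper's choice does not (harmless for (\textbf{UT}), since the Kurtz--Protter criterion does not really require canonicity, but yours is the more scrupulous reading). For (ii) the roles are reversed: the paper verifies (\ref{UCV2}), bounding the jumps of the martingale $\tilde{Y}_{\varepsilon}$ by $\varepsilon/\sigma(\varepsilon)$ and invoking statement 2 of Proposition \ref{21juiprop}, which leaves implicit exactly the kind of uniformity-in-$\varepsilon$ issue you address in (i) (boundedness of $\varepsilon/\sigma(\varepsilon)$ on all of $(0,1)$, not just near $0$, which follows from the monotonicity of $\sigma$); your verification of (\ref{UCV1}) via $\mathbb{E}\left([\tilde{Y}_{\varepsilon},\tilde{Y}_{\varepsilon}](1)\right)=\sigma(\varepsilon)^{-2}\sigma^{2}(\varepsilon)=1$ is exact, cleaner, and uses the hypothesis only through the weak convergence itself. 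Both arguments are sound; in effect you trade the paper's trivial part (i) for a trivial part (ii).
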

\begin{proof}
(i) Since the process $Y_{\varepsilon}$ is a pure jump subordinator, then the condition (\ref{UCV1}) becomes

\begin{equation}
\sup_{\varepsilon\in (0,1)}\left\{\mathbb{E}\left(\int_{0}^{1}|dY_{\varepsilon}(t)|\right)\right\}= 
\sup_{\varepsilon\in (0,1)}\mathbb{E}\left(Y_{\varepsilon}(1)\right)=1.
\end{equation}
So the (\textbf{UT}) condition is a consequence of Proposition \ref{KPUT}.
 
\noindent (ii) First notice that, for each $\varepsilon \in (0,1)$, $\tilde{Y}_{\varepsilon}$ is a martingale with jumps 
bounded by $\varepsilon/\sigma(\varepsilon)$. Thus we obtain
\[
\mathbb{E}\left( \sup_{t\leq 1}|\Delta \tilde{Y}_{\varepsilon}(t)| \right)\leq \frac{\varepsilon}{\sigma(\varepsilon)}. 
\]
As a consequence of statement 2 of Proposition \ref{21juiprop} we have 
\[
\sup_{\varepsilon \in (0,1)}\mathbb{E}\left( \sup_{t\leq 1}|\Delta \tilde{Y}_{\varepsilon}(t)| \right)<\infty,
\]
which implies that condition (\ref{UCV2}) is satisfied.
Since $\tilde{Y}_{\varepsilon}$ is weakly convergent, then (\textbf{UT}) condition follows from Proposition \ref{KPUT}. 
\end{proof}

\section{ $\alpha$-Continuity of SDEs dri\-ven by L\'{e}vy processes}

The previous section established the weak convergence and uniform tightness for certain families of L{\'e}vy processes.  
Now we would like to apply these results to study the continuous dependence 
 problem for SDEs driven by these families of L{\'e}vy processes. 
For a survey on SDEs driven by L\'{e}vy processes we refer to \cite{B-S}. 
To begin, we give some notations useful in the sequel: for each $n\in \{2,3, \ldots\}$, 
\textquotedblleft$\overset{\mathbb{D}^n}{\longrightarrow}$\textquotedblright\, 
and \textquotedblleft$\mathbb{D}^{n}$-tight\textquotedblright\, denote
the weak convergence and tightness in $\mathbb{D}([0,1],\mathbb{R}^{n})$ 
endowed with the Skorohod topology. 
In the same way
\textquotedblleft$\overset{\mathbb{C}^n}{\longrightarrow}$\textquotedblright\, 
and \textquotedblleft$\mathbb{C}^{n}$-tight\textquotedblright\, denote 
the weak convergence and tightness for the uniform topology.  
\subsection{The modified tempered stable case}

We will make the following assumptions

\begin{description}
\item[(H.1)]$a_{\alpha},h_{\alpha}:\mathbb{R}\longrightarrow \mathbb{R}$ are continuous such that
$|a_{\alpha}(x)|+|h_{\alpha}(x)|\leq K(1+|x|)$ for all $\alpha \in (0,1/2), ~x \in \mathbb{R}$.
\item [(H.2)] The family $a_{\alpha}$ (resp.~$h_{\alpha}$) converge uniformly to 
$a$ (resp.~$h$) on each compact set in $\mathbb{R}$, as $\alpha\rightarrow0$. 
 \end{description}

We consider the following SDEs 
\begin{equation}
dY_{\alpha}^{TSS}(t)=a_{\alpha}(Y_{\alpha}^{TSS}(t_{-}))dX_{\alpha}^{TSS}(t)+
h_{\alpha}(Y_{\alpha}^{TSS}(t))dt,\quad Y_{\alpha}^{TSS}(0)=0,\label{fev09-eq1}
\end{equation} 
and 
 \begin{equation}
dY(t)=a(Y(t_{-}))dX_{\gamma}(t)+h(Y(t))dt,\quad Y(0)=0.\label{fev09-eq4}
\end{equation} 
\begin{remark}\label{exlem}
\begin{enumerate}
 \item Under the assumption \textbf{(H.1)}, for each $\alpha\in(0,1/2)$, the equation (\ref{fev09-eq1}) 
admits a weak solution, see  \cite{JM}.
\item Since the coefficients $a_{\alpha}$ and $a$ are not Lipschitz, then we do not have uniqueness of solutions
 for either equation (\ref{fev09-eq1}) or equation (\ref{fev09-eq4}).
\end{enumerate}
\end{remark}

The first $\alpha$-continuity result concerns the class of tempered stable subordinators. 

\begin{theorem}
\label{fev09-eq3} Under the assumptions \textbf{(H.1)-(H.2)} we have 
\begin{enumerate}
 \item The family of processes $(Y_{\alpha}^{TSS},X_{\alpha}^{TSS})$ is $\mathbb{D}^{2}$-tight.
\item Any limit point $(Y,X_{\gamma})$ of the family $(Y_{\alpha}^{TSS},X_{\alpha}^{TSS})$
satisfies equation (\ref{fev09-eq4}).
\item If uniqueness in law holds for the equation (\ref{fev09-eq4}), then 
\[
(Y_{\alpha}^{TSS},X_{\alpha}^{TSS}){ \overset{\mathbb{D}^{2}}{\longrightarrow}} (Y,X_{\gamma}),\quad \alpha\rightarrow0.
\] 
\end{enumerate}
\end{theorem}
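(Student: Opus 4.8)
The plan is to establish the three statements in order, since each one builds on the previous. The overall strategy rests on combining the weak convergence results (Proposition~\ref{TVYconverge}) and the uniform tightness of the driving family (Proposition~\ref{prop1sept20}) with the stability theory for stochastic integrals under the (\textbf{UT}) condition, as developed in the references cited in the excerpt (\cite{KP3}, \cite{MS}).

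For statement~1, the $\mathbb{D}^{2}$-tightness of the pair $(Y_{\alpha}^{TSS},X_{\alpha}^{TSS})$, I would first recall that $X_{\alpha}^{TSS}\overset{\mathbb{D}}{\longrightarrow}X_{\gamma}$ by Proposition~\ref{TVYconverge}(i), so in particular the driving family $\{X_{\alpha}^{TSS}\}$ is tight in $(\mathbb{D},\mathcal{J}_{1})$. Since this family also satisfies the (\textbf{UT}) condition by Proposition~\ref{prop1sept20}(i), and the coefficients satisfy the linear growth bound in \textbf{(H.1)}, one can invoke the general tightness result for solutions of SDEs driven by (\textbf{UT}) sequences (this is precisely the role of the stability theorem of \cite{MS}). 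The linear growth condition gives a uniform (in $\alpha$) control on the moments of $Y_{\alpha}^{TSS}$, typically via a Gronwall-type estimate applied to $\mathbb{E}(\sup_{t\le 1}|Y_{\alpha}^{TSS}(t)|)$, which together with the tightness of the driving noise yields tightness of the solution family; joint tightness of the pair then follows since tightness of each coordinate in $\mathbb{D}$ gives tightness of the pair in $\mathbb{D}^{2}$ for these processes.

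For statement~2, I would take any subsequence along which $(Y_{\alpha}^{TSS},X_{\alpha}^{TSS})\overset{\mathbb{D}^{2}}{\longrightarrow}(Y,X_{\gamma})$ (such a convergent subsequence exists by statement~1) and pass to the limit in the equation~(\ref{fev09-eq1}). The key analytical input here is the convergence-of-stochastic-integrals theorem: because $\{X_{\alpha}^{TSS}\}$ satisfies (\textbf{UT}) and converges weakly jointly with the integrands, the stochastic integrals $\int_{0}^{\cdot}a_{\alpha}(Y_{\alpha}^{TSS}(t_{-}))\,dX_{\alpha}^{TSS}(t)$ converge weakly to $\int_{0}^{\cdot}a(Y(t_{-}))\,dX_{\gamma}(t)$. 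The uniform convergence of the coefficients $a_{\alpha}\to a$ and $h_{\alpha}\to h$ on compacts from \textbf{(H.2)}, combined with the continuous-mapping/Skorohod-representation argument, is what allows replacing $a_{\alpha}$ by $a$ in the limit. The drift term $\int_{0}^{\cdot}h_{\alpha}(Y_{\alpha}^{TSS}(t))\,dt$ is handled similarly but more easily, being an ordinary (pathwise) integral. Hence the limit $(Y,X_{\gamma})$ satisfies~(\ref{fev09-eq4}).

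For statement~3, the upgrade from subsequential convergence to full convergence is routine once uniqueness in law for~(\ref{fev09-eq4}) is assumed: statement~1 gives tightness, so every subsequence has a further convergent subsubsequence; statement~2 identifies every limit point as a solution of~(\ref{fev09-eq4}); and uniqueness in law forces all these limit points to share the same law, which by a standard subsequence argument yields convergence of the whole family. \textbf{The main obstacle} I anticipate is the rigorous justification of the limit passage in statement~2 — specifically, verifying the hypotheses of the stochastic-integral convergence theorem of \cite{KP3}, namely that the pair (integrand, integrator) converges jointly in $\mathbb{D}^{2}$ and that the discontinuity of the coefficient composition at jump times is controlled. The subtlety lies in the fact that $a_{\alpha}$ is evaluated at the left limit $Y_{\alpha}^{TSS}(t_{-})$, so one must ensure joint weak convergence of $(a_{\alpha}(Y_{\alpha}^{TSS}(\cdot_{-})),X_{\alpha}^{TSS})$ rather than merely marginal convergence; the non-Lipschitz nature of the coefficients (noted in Remark~\ref{exlem}) means uniqueness cannot be used to bypass the subsequence extraction, so the argument must genuinely proceed through limit points, which is exactly why statement~3 is phrased conditionally on uniqueness in law.
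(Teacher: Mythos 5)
There is a genuine gap, and it sits exactly where the paper's proof does its real work: your claim that ``tightness of each coordinate in $\mathbb{D}$ gives tightness of the pair in $\mathbb{D}^{2}$ for these processes'' is false as a general principle for the Skorohod topology. On $\mathbb{D}([0,1],\mathbb{R}^{2})$ the $\mathcal{J}_{1}$ topology is strictly finer than the product of the one-dimensional $\mathcal{J}_{1}$ topologies, because a single time change must align the jumps of \emph{both} components; marginal tightness only yields tightness for the product topology. (Deterministic counterexample: $x_{n}=1\!\!1_{[1/2,1]}$ and $y_{n}=1\!\!1_{[1/2+1/n,1]}$ are each convergent in $\mathbb{D}$, but the pairs $(x_{n},y_{n})$ are not relatively compact in $\mathbb{D}([0,1],\mathbb{R}^{2})$.) The usual escape hatch --- joint convergence is automatic when one of the limits is continuous --- is unavailable here, since both $X_{\gamma}$ and any solution $Y$ of (\ref{fev09-eq4}) have jumps, and indeed they jump at the same times. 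A second, smaller elision compounds this: your Gronwall estimate (whether on $\mathbb{E}(\sup_{t\leq1}|Y_{\alpha}^{TSS}(t)|)$ as you propose, or pathwise as in the paper) only gives stochastic boundedness of $\sup_{t\leq 1}|Y_{\alpha}^{TSS}(t)|$, which is not $\mathbb{D}$-tightness of $\{Y_{\alpha}^{TSS}\}$ either; an Aldous-type modulus control is still needed, and your appeal to an unspecified ``general tightness result for solutions of SDEs driven by (\textbf{UT}) sequences'' does not supply it, precisely because the coefficients are only continuous (Remark~\ref{exlem}), so no off-the-shelf Lipschitz-based stability theorem applies.

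What the paper actually does to close this hole is the following chain, none of which appears in your proposal. First, the Gronwall bound plus \cite[Lemme 1-6]{MS} is used to show that the \emph{solution} family $\{Y_{\alpha}^{TSS}\}$ itself satisfies (\textbf{UT}) (not merely the driver, which is Proposition~\ref{prop1sept20}). Second, since $a_{\alpha}$ is only continuous, it is approximated uniformly on compacts by $C^{2}$ functions $a_{\alpha,n}$; by \cite[Lemme 1-7]{MS} the (\textbf{UT}) property of $\{Y_{\alpha}^{TSS}\}$ transfers to the integrand family $\{a_{\alpha,n}(Y_{\alpha}^{TSS})\}$. Third, \cite[Proposition 3-3]{MS} (equivalently the Kurtz--Protter machinery of \cite{KP1991}) then yields \emph{joint} $\mathbb{D}^{2}$-tightness of the pair (stochastic integral, integrator), and a uniform-in-probability passage $n\to\infty$ carries this back to $(Y_{\alpha}^{TSS},X_{\alpha}^{TSS})$. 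Your outlines of statements 2 and 3 (limit identification via the (\textbf{UT}) stochastic-integral convergence theorem, then the subsequence argument under uniqueness in law) do match the route the paper takes --- it cites \cite[Th{\'e}or{\`e}me 3.5]{MS} for exactly that --- and you correctly flag that joint convergence of $(a_{\alpha}(Y_{\alpha}^{TSS}(\cdot_{-})),X_{\alpha}^{TSS})$ is what is needed there; but that joint convergence is available only because statement 1 provides joint tightness, which your argument never actually establishes.
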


\begin{proof}
1. At first we show that the family $\{ Y_{\alpha}^{TSS},\, \alpha\in(0,1/2)\}$ verify the (\textbf{UT}) condition.
For that we need to prove that the family $\{ \sup_{s\in [0,1]}|Y_{\alpha}^{TSS}(s)|,\, \alpha\in(0,1/2)\}$ 
is bounded in probability. Indeed, as $Y_{\alpha}^{TSS}$ satisfies the equation (\ref{fev09-eq1}) then, for $t\in [0,1]$, we have  
\begin{equation}
Y_{\alpha}^{TSS}(t)=\int_{0}^{t}a_{\alpha}(Y_{\alpha}^{TSS}(s_{-}))dX_{\alpha}^{TSS}(s)+
\int_{0}^{t}h_{\alpha}(Y_{\alpha}^{TSS}(s))ds.\label{eq1jan28}
\end{equation}
The set $\left\{ t:\, Y_{\alpha}^{TSS}(t)\neq Y_{\alpha}^{TSS}(t_{-})\right\} $ is countable
and so far it is Lebesgue negligible. Owing to this fact we may replace
$h_{\alpha}(Y_{\alpha}^{TSS}(t))$ by $h_{\alpha}(Y_{\alpha}^{TSS}(t_{-}))$ in the right-hand side of (\ref{eq1jan28}) and obtain
\begin{equation*}
Y_{\alpha}^{TSS}(t)=\int_{0}^{t}a_{\alpha}(Y_{\alpha}^{TSS}(s_{-}))dX_{\alpha}^{TSS}(s)+
\int_{0}^{t}h_{\alpha}(Y_{\alpha}^{TSS}(s_{-}))ds.
\end{equation*}
Using assumption \textbf{(H.1)} we get

\begin{equation*}
|Y_{\alpha}^{TSS}(t)|  \leq  K\int_{0}^{t}(1+|Y_{\alpha}^{TSS}(s_{-})|)d\{s+X_{\alpha}^{TSS}(s)\}, ~\forall\, t\in [0,1]. 
\end{equation*}
It follows from a Gronwall type inequality, see \cite[pp. 352]{Pr}, that
\begin{equation*}
|Y_{\alpha}^{TSS}(t)|  \leq  K\exp(t+X_{\alpha}^{TSS}(t)), ~\forall\, t\in [0,1]. 
\end{equation*}
Now since $X_{\alpha}^{TSS}$ is increasing it yields
\begin{equation*}
\sup_{t\in [0,1]}|Y_{\alpha}^{TSS}(t)|  \leq  K\exp(1+X_{\alpha}^{TSS}(1)). 
\end{equation*}
We infer the boundedness in probability of the family $\{ \sup_{s\in [0,1]}|Y_{\alpha}^{TSS}(s)|,\, \alpha\in(0,1/2)\}$ 
 from the boundedness in probability of the the family $\{ X_{\alpha}^{TSS}(1),\, \alpha\in(0,1/2)\}$
which is a consequence of the uniform tightness of the family $\{ X_{\alpha}^{TSS},\, \alpha\in(0,1/2)\}$, 
see Lemma 1.2 of \cite{JMP} or \cite{S84}. 

Hence the family $\{ \sup_{s\in [0,1]}|a_{\alpha}(Y_{\alpha}^{TSS}(s))|,\, \alpha\in(0,1/2)\}$ 
(resp. $\{ \sup_{s\in [0,1]}|h_{\alpha}(Y_{\alpha}^{TSS}(s))|,\, \alpha\in(0,1/2)\}$) 
 is also bounded in probability since $a_{\alpha}$ (resp.~$h_{\alpha}$) has at most linear growth. 
Therefore it is easy to see that the family 
$\{\int_{0}^{\cdot}h_{\alpha}(Y_{\alpha}^{TSS}(t))dt,\, \alpha\in(0,1/2)\}$ 
satisfies the (\textbf{UT}) condition.
On the other hand, the uniform tightness of the family 
$\{\int_{0}^{\cdot}a_{\alpha}(Y_{\alpha}^{TSS}(t))dX_{\alpha}^{TSS}(t),\allowbreak \alpha\in(0,1/2)\}$ follows from \cite[Lemme 1-6]{MS}.
As a consequence we get the (\textbf{UT}) condition for the family $\{ Y_{\alpha}^{TSS},\, \alpha\in(0,1/2)\}$.

On the next step we show that the family of processes $(Y_{\alpha}^{TSS},X_{\alpha}^{TSS})$ is $\mathbb{D}^{2}$-tight.
Since the function $a_{\alpha}$ is continuous we can always find a sequence of $C^{2}$ functions,
$\{a_{\alpha,n},\, n\in \mathbb{N}\}$, which approximate uniformly $a_{\alpha}$
 on compact sets of $\mathbb{R}$. Now let us consider the sequence of process $Y_{\alpha,n}^{TSS}$ defined by
\begin{equation}
dY_{\alpha,n}^{TSS}(t)=a_{\alpha,n}(Y_{\alpha}^{TSS}(t_{-}))dX_{\alpha}^{TSS}(t)+
h_{\alpha}(Y_{\alpha}^{TSS}(t))dt,\quad Y_{\alpha}^{TSS}(0)=0.
\end{equation} 
As the function $a_{\alpha,n}$ is of class $C^{2}$ then we get from \cite[Lemme 1-7]{MS} that 
the family $\{ a_{\alpha,n}(Y_{\alpha}^{TSS}), \alpha\in(0,1/2)\}$ is uniformly tight.
Now it follows from \cite[Proposition 3-3]{MS} (see also \cite{KP1991}) that the family of processes
$(\int_{0}^{\cdot}a_{\alpha,n}(Y_{\alpha}^{TSS}(t))dX_{\alpha}^{TSS}(t), \allowbreak X_{\alpha}^{TSS})$  
is $\mathbb{D}^{2}$-tight and consequently $(Y_{\alpha,n}^{TSS},X_{\alpha}^{TSS})$ is also $\mathbb{D}^{2}$-tight. 
It is simple to see that 
\[
 \lim_{n\rightarrow \infty} P\left[ \sup_{t\leq1}|Y_{\alpha,n}^{TSS}(t)-Y_{\alpha}^{TSS}(t)|>\delta\right]=0
\]
for all $\delta>0$. 
Then we use again \cite[Proposition 3-3]{MS} to obtain that the family of processes
 $(Y_{\alpha}^{TSS},X_{\alpha}^{TSS})$ is $\mathbb{D}^{2}$-tight.

The proof of both assertions 2 and 3 is similar to the one of \cite[Th{\'e}or{\`e}me 3.5]{MS}, therefore we omit it.
\end{proof}
We now turn to an example of equation (\ref{fev09-eq4}), for which there is no uniqueness in law.
\begin{example} 
We consider the following equation
\begin{equation}
dY(t)=dX_{\gamma}(t)+h(Y(t))dt,\quad Y(0)=0,\label{eq1-fev8}
\end{equation} 
where $h$ is bounded continuous and equal to $ \mathrm{sign}(x)|x|^{\beta}$ in some
neighborhood of $x=0$ for certain positive constant $\beta<1$.

This example is inspired by the work of of \cite{Tan1},
who treats the problem of uniqueness of the equation (\ref{eq1-fev8}) with the symmetric stable process instead of the gamma process.
Precisely, the authors show that the equation (\ref{eq1-fev8}), with the drift $h$ given above, does not admit the uniqueness property.
The key of the proof is the asymptotic rate of growth of the sample paths of the symmetric stable process at the origin.
In our case the gamma process satisfies the following short time behavior 
\begin{equation}
 \lim_{t\downarrow0}~\frac{X_{\gamma}(t)}{t^{1/(1-\beta)}}=0 \quad a.s.
\end{equation}
which is a consequence of the finiteness of $\displaystyle\int_{0}^{1}\Lambda_{\gamma}[t^{1/(1-\beta)},+\infty)~dt$,
 see \cite{Ber1} or \cite{S}. 
 This is sufficient to show non-uniqueness proceeding along the lines as in  \cite{Tan1}, Theorem 3.2.
\end{example}

In a similar way we obtain an analogous $\alpha$-continuity result if we replace the processes $X_{\alpha}^{TSS}$ and $X_{\gamma}$ 
in equations (\ref{fev09-eq1}) and (\ref{fev09-eq4})
by $X_{\alpha}^{MTS}$ and $X^{NIG}$ respectively and assumption \textbf{(H.2)} by
\begin{description}
\item [(H.$\bf{2}^\prime$)] The family $a_{\alpha}$ (resp. $h_{\alpha}$) converge uniformly to 
$a$ (resp. $h$) on each compact set in $\mathbb{R}$, as $\alpha\rightarrow 1/2$. 
\end{description}

We state this in the following theorem.

\begin{theorem}
Under the assumptions \textbf{(H.1)} and \textbf{(H.$\bf{2}^\prime$)} we have 
\begin{enumerate}
 \item The family of processes $(Z_{\alpha}^{MTS},X_{\alpha}^{MTS})$ with 
\begin{equation}
dZ_{\alpha}^{MTS}(t)=a_{\alpha}(Z_{\alpha}^{MTS}(t_{-}))\,dX_{\alpha}^{MTS}(t)+h_{\alpha}(Z_{\alpha}^{MTS}(t))\,dt
,\quad Z_{\alpha}^{MTS}(0)=0,\label{Sept09-eq1}
\end{equation}
is $\mathbb{D}^{2}$-tight.
\item  Any limit point $(Z,X^{NIG})$ of the family $(Z_{\alpha}^{MTS},X_{\alpha}^{MTS})$
satisfies equation 
\begin{equation}
dZ(t)=a(Z(t_{-}))\,dX^{NIG}(t)+h(Z(t))\,dt,\quad Z(0)=0. \label{eqnig} 
\end{equation}
\item If uniqueness in law holds for equation (\ref{eqnig}), then 
\[
(Z_{\alpha}^{MTS},X_{\alpha}^{MTS}){ \overset{\mathbb{D}^{2}}{\longrightarrow}} (Z,X^{NIG}),\quad
  \alpha\rightarrow1/2.
\]
\end{enumerate}
\end{theorem}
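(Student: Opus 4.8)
The plan is to follow the proof of Theorem \ref{fev09-eq3} line by line, feeding the MTS-analogues of its three ingredients into the stability machinery of \cite{MS}: the family $\{X_{\alpha}^{MTS},\,\alpha\in(0,1/2)\}$ is uniformly tight by Proposition \ref{prop1sept20}(ii), it converges weakly, $X_{\alpha}^{MTS}\overset{\mathbb{D}}{\longrightarrow}X^{NIG}$, by Proposition \ref{Re1-ES}(ii), and the coefficients satisfy the linear growth \textbf{(H.1)} together with the uniform-on-compacts convergence \textbf{(H.$\mathbf{2}^{\prime}$)}. As before, the heart of the matter is part~1; once $(Z_{\alpha}^{MTS},X_{\alpha}^{MTS})$ is shown to be $\mathbb{D}^{2}$-tight, parts~2 and~3 follow from \cite[Th{\'e}or{\`e}me 3.5]{MS} exactly as in the tempered stable subordinator case.

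First I would establish that $\{\sup_{t\in[0,1]}|Z_{\alpha}^{MTS}(t)|,\,\alpha\in(0,1/2)\}$ is bounded in probability. Here lies the one genuine departure from Theorem \ref{fev09-eq3}: since $X_{\alpha}^{MTS}$ is not a subordinator, the monotone Gronwall estimate used for $X_{\alpha}^{TSS}$ is no longer available, and one cannot simply replace it by the total variation, because $\mathbb{E}\int_{0}^{1}|dX_{\alpha}^{MTS}(s)|=\int_{\mathbb{R}}|s|\,d\Lambda_{\alpha}^{MTS}(s)$ diverges as $\alpha\to1/2$ (near the origin the MTS density behaves like that of the $2\alpha$-stable law). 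Instead I would use the canonical decomposition (\ref{Aout13eq1}), which for $X_{\alpha}^{MTS}$ reads $X_{\alpha}^{MTS}=R_{\alpha,0}^{MTS}+N_{\alpha}^{MTS}$ with $\mathbb{E}(R_{\alpha}^{MTS}(1))=0$, where $R_{\alpha,0}^{MTS}$ is a square-integrable martingale with $\mathbb{E}((R_{\alpha,0}^{MTS}(1))^{2})=\int_{|s|\le1}s^{2}\,d\Lambda_{\alpha}^{MTS}(s)$ and $N_{\alpha}^{MTS}$ is the finite-activity big-jump part with $\mathbb{E}\int_{0}^{1}|dN_{\alpha}^{MTS}(s)|=\int_{|s|>1}|s|\,d\Lambda_{\alpha}^{MTS}(s)$. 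Both quantities were shown to be bounded uniformly in $\alpha\in(0,1/2)$ in the proof of Proposition \ref{prop1sept20}(ii). Splitting the stochastic integral in (\ref{Sept09-eq1}) accordingly, applying the Burkholder--Davis--Gundy and Doob inequalities to the martingale-driven part and the elementary compensation bounds to the big-jump and drift parts (all after localization by $\tau_{R}=\inf\{t:|Z_{\alpha}^{MTS}(t)|\ge R\}$), the linear growth \textbf{(H.1)} lets me close a Gronwall inequality and conclude $\sup_{\alpha}\mathbb{E}(\sup_{t\le1}|Z_{\alpha}^{MTS}(t)|^{2})<+\infty$, hence the desired boundedness in probability.

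With this bound in hand the remainder of part~1 is identical to Theorem \ref{fev09-eq3}. Linear growth transfers the boundedness in probability to $\{\sup_{t}|a_{\alpha}(Z_{\alpha}^{MTS}(t))|\}$ and $\{\sup_{t}|h_{\alpha}(Z_{\alpha}^{MTS}(t))|\}$; hence $\int_{0}^{\cdot}h_{\alpha}(Z_{\alpha}^{MTS}(t))\,dt$ is uniformly tight and, by \cite[Lemme 1-6]{MS}, so is $\int_{0}^{\cdot}a_{\alpha}(Z_{\alpha}^{MTS}(t))\,dX_{\alpha}^{MTS}(t)$, whence $\{Z_{\alpha}^{MTS}\}$ satisfies (\textbf{UT}). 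Approximating $a_{\alpha}$ uniformly on compacts by $C^{2}$ functions, \cite[Lemme 1-7]{MS} and \cite[Proposition 3-3]{MS} (see also \cite{KP1991}) give the $\mathbb{D}^{2}$-tightness of the approximating pairs $(Z_{\alpha,n}^{MTS},X_{\alpha}^{MTS})$, and passing to the limit $n\to\infty$ (the approximating solutions converging to $Z_{\alpha}^{MTS}$ uniformly in probability) a final application of \cite[Proposition 3-3]{MS} yields the $\mathbb{D}^{2}$-tightness of $(Z_{\alpha}^{MTS},X_{\alpha}^{MTS})$.

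For parts~2 and~3 I would invoke \cite[Th{\'e}or{\`e}me 3.5]{MS}. Given any convergent subsequence $(Z_{\alpha_{k}}^{MTS},X_{\alpha_{k}}^{MTS})\overset{\mathbb{D}^{2}}{\longrightarrow}(Z,X^{NIG})$ --- the second coordinate being identified through Proposition \ref{Re1-ES}(ii) --- the uniform tightness established above, together with \textbf{(H.$\mathbf{2}^{\prime}$)}, permits passing to the limit in the stochastic integral, $\int_{0}^{\cdot}a_{\alpha_{k}}(Z_{\alpha_{k}}^{MTS}(s_{-}))\,dX_{\alpha_{k}}^{MTS}(s)\to\int_{0}^{\cdot}a(Z(s_{-}))\,dX^{NIG}(s)$, and likewise in the drift term, so that $(Z,X^{NIG})$ solves (\ref{eqnig}); this is part~2. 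If uniqueness in law holds for (\ref{eqnig}), then every limit point of the tight family has the same law and the whole family converges, which is part~3. The only real obstacle is the a priori bound of the second paragraph: the cancellation encoded in the martingale part of $X_{\alpha}^{MTS}$, rather than its (uniformly unbounded) total variation, is what must be exploited to keep the estimate uniform up to $\alpha=1/2$.
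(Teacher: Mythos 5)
Your proposal follows the same architecture that the paper intends when it states this theorem ``in a similar way'' to Theorem \ref{fev09-eq3}: uniform tightness of the driving family from Proposition \ref{prop1sept20}(ii), the weak convergence $X_{\alpha}^{MTS}\overset{\mathbb{D}}{\longrightarrow}X^{NIG}$ from Proposition \ref{Re1-ES}(ii), the $C^{2}$-approximation of $a_{\alpha}$ combined with \cite[Lemme 1-7]{MS} and \cite[Proposition 3-3]{MS} for the $\mathbb{D}^{2}$-tightness, and \cite[Th{\'e}or{\`e}me 3.5]{MS} for parts 2 and 3. Where you genuinely depart from the paper is the a priori bound on $\sup_{t\in[0,1]}|Z_{\alpha}^{MTS}(t)|$, and your diagnosis there is exactly right: the Gronwall step of Theorem \ref{fev09-eq3} exploits that $X_{\alpha}^{TSS}$ is increasing, and the only increasing substitute for $X_{\alpha}^{MTS}$, its total variation, is useless here since $\int_{\mathbb{R}}|s|\,d\Lambda_{\alpha}^{MTS}(s)$ (and in fact the law of the variation itself) blows up as $\alpha\rightarrow1/2$, the limit $X^{NIG}$ having infinite variation paths. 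Replacing monotonicity by the cancellation in the decomposition (\ref{Aout13eq1}), with $\mathbb{E}(R_{\alpha}^{MTS}(1))=0$, and running a martingale-type Gronwall estimate is the correct repair, and it buys precisely what the paper's terse ``in a similar way'' glosses over.

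One imprecision should be fixed in your second paragraph. You aim to close a Gronwall inequality for $\mathbb{E}\left(\sup_{t\leq1}|Z_{\alpha}^{MTS}(t)|^{2}\right)$, but for the big-jump part $N_{\alpha}^{MTS}$ you only invoke the first-moment bound $\mathbb{E}\int_{0}^{1}|dN_{\alpha}^{MTS}(s)|=\int_{|s|>1}|s|\,d\Lambda_{\alpha}^{MTS}(s)\leq C$. An $L^{1}$ bound on the variation of $N_{\alpha}^{MTS}$ does not feed into an $L^{2}$ Gronwall: the resulting estimate $\mathbb{E}\left(\left(1+\sup_{t}|Z_{\alpha}^{MTS}(t)|\right)\int_{0}^{1}|dN_{\alpha}^{MTS}(s)|\right)$ neither factorizes (the two factors are dependent) nor has the right order. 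The repair is cheap and stays within the paper's toolbox: by the symmetry of $\Lambda_{\alpha}^{MTS}$, the process $N_{\alpha}^{MTS}$ is itself a centred square-integrable martingale, with predictable bracket $t\int_{|s|>1}s^{2}\,d\Lambda_{\alpha}^{MTS}(s)\leq t\int_{\mathbb{R}}s^{2}\,d\Lambda_{\alpha}^{MTS}(s)=t\,\sqrt{\pi}\,2^{1/2-\alpha}\,\Gamma(1-\alpha)$, which is exactly the quantity already bounded uniformly in $\alpha\in(0,1/2)$ in the proof of Proposition \ref{prop1sept20}(ii). Hence Doob/Burkholder--Davis--Gundy applies to the $N_{\alpha}^{MTS}$-integral on the same footing as to the $R_{\alpha,0}^{MTS}$-integral, your $L^{2}$ Gronwall closes after localization, and the rest of your argument (transfer of stochastic boundedness to the coefficients, \cite[Lemme 1-6]{MS} for the \textbf{UT} property of $\{Z_{\alpha}^{MTS}\}$, the $C^{2}$-approximation, and the limit identification via \cite[Th{\'e}or{\`e}me 3.5]{MS}) goes through as written.
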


\subsection{The renormalized case}
Finally, we conclude the section presenting a $\varepsilon$-continuity result for SDEs driving by the renormalized families 
$\{ Y_{\varepsilon},\, \tilde{Y}_{\varepsilon},\, \varepsilon \in (0,1)\}$.
 To do so, let us consider the following equations
\begin{equation}
 dZ_{\varepsilon}(t)=a_{\varepsilon}(Z_{\varepsilon}(t_{-}))\,d\tilde{Y}_{\varepsilon}(t)+
h_{\varepsilon}(Z_{\varepsilon}(t))dY_{\varepsilon}(t)
,\quad Z_{\varepsilon}(0)=0,\label{21juieq1}
\end{equation}
and
\begin{equation}
 dZ(t)=a(Z(t))\,dW(t)+h(Z(t))\,dt,\quad Z(0)=0,\label{21juieq2}
\end{equation}
Our result then is stated in the following theorem.
\begin{theorem}
 Assume that 
\begin{description}
\item[(i)] $\mu(\varepsilon)/\varepsilon \rightarrow +\infty$, $\varepsilon \rightarrow 0$;
\item[(ii)] $\tilde{Y}_{\varepsilon}{ \overset{\mathbb{C}}{\longrightarrow}}~W$, $\varepsilon \rightarrow 0$; 
\item[(iii)] the families $\{ Y_{\varepsilon},\,\varepsilon \in (0,1)\}$ and
$\{ \tilde{Y}_{\varepsilon},\, \varepsilon \in (0,1)\}$ are independent;
\item[(iv)] the coefficients $h_{\varepsilon}$, 
$a_{\varepsilon}$ and $h$, $a$ satisfy the assumptions \textbf{(H.1)-(H.2)}.
\end{description} 
Then we have
\begin{enumerate}
 \item The family $\{(Z_{\varepsilon},\tilde{Y}_{\varepsilon}, Y_{\varepsilon}),~\varepsilon\in (0,1)\}$ is $\mathbb{C}^{3}$-tight.
\item  Any limit point $(Z,W,{\bf{t}})$ of the family $(Z_{\varepsilon},\tilde{Y}_{\varepsilon}, Y_{\varepsilon})$
satisfies equation (\ref{21juieq2}).

\item If uniqueness in law holds for equation (\ref{21juieq2}) then
 \[
(Z_{\varepsilon},\tilde{Y}_{\varepsilon}, Y_{\varepsilon}){ \overset{\mathbb{C}^{3}}{\longrightarrow}} (Z,W,{\bf{t}}),\quad
  \varepsilon \rightarrow 0.
\]
\end{enumerate}
\end{theorem}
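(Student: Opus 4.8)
The plan is to reproduce the architecture of the proof of Theorem~\ref{fev09-eq3}, now for the two--driver equation~(\ref{21juieq1}). The one genuinely new feature is that the integrator $\tilde{Y}_{\varepsilon}$ is a martingale rather than an increasing process, so the pathwise Gronwall bound of Theorem~\ref{fev09-eq3} must be replaced by a moment estimate. First I would collect the ingredients already available: hypothesis~(i) together with the convergence result of \cite{covo} and the ensuing Remark gives $Y_{\varepsilon}\overset{\mathbb{C}}{\longrightarrow}\mathbf{t}$, hypothesis~(ii) gives $\tilde{Y}_{\varepsilon}\overset{\mathbb{C}}{\longrightarrow}W$, and Proposition~\ref{prop2sept20} gives that both driving families satisfy (\textbf{UT}). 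Because the limit $\mathbf{t}$ is deterministic, the marginal convergences combine into $(\tilde{Y}_{\varepsilon},Y_{\varepsilon})\overset{\mathbb{C}^{2}}{\longrightarrow}(W,\mathbf{t})$, and the independence hypothesis~(iii) ensures that the marginal (\textbf{UT}) properties yield the joint (\textbf{UT}) of the pair needed to transfer limits through the stochastic integrals.

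For assertion~1 I would first show that $\{\sup_{t\leq 1}|Z_{\varepsilon}(t)|,\,\varepsilon\in(0,1)\}$ is bounded in probability. Writing~(\ref{21juieq1}) in integral form and using the linear growth in~(H.1), I would control the martingale part $\int_{0}^{\cdot}a_{\varepsilon}(Z_{\varepsilon}(s_{-}))\,d\tilde{Y}_{\varepsilon}(s)$ by a Burkholder--Davis--Gundy inequality, exploiting that $\mathbb{E}([\tilde{Y}_{\varepsilon},\tilde{Y}_{\varepsilon}](1))=1$ uniformly in $\varepsilon$, and the finite--variation part $\int_{0}^{\cdot}h_{\varepsilon}(Z_{\varepsilon}(s_{-}))\,dY_{\varepsilon}(s)$ by monotonicity of $Y_{\varepsilon}$ and $\mathbb{E}(Y_{\varepsilon}(1))=1$. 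A stochastic Gronwall argument then produces a bound on $\mathbb{E}(\sup_{s\leq t}|Z_{\varepsilon}(s)|^{2})$ uniform in $\varepsilon$, giving the required boundedness in probability. It follows that $\{\sup_{s}|a_{\varepsilon}(Z_{\varepsilon}(s))|\}$ and $\{\sup_{s}|h_{\varepsilon}(Z_{\varepsilon}(s))|\}$ are bounded in probability, so by \cite[Lemme~1-6]{MS} together with Proposition~\ref{prop2sept20} the two integral processes, and hence $\{Z_{\varepsilon}\}$, satisfy (\textbf{UT}).

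To promote (\textbf{UT}) to $\mathbb{C}^{3}$--tightness I would repeat the approximation device of Theorem~\ref{fev09-eq3}: approximate $a_{\varepsilon}$ and $h_{\varepsilon}$ uniformly on compacts by $C^{2}$ functions, apply \cite[Lemme~1-7]{MS} and \cite[Proposition~3-3]{MS} to obtain $\mathbb{D}^{3}$--tightness of the approximating triples, and let the approximation parameter tend to infinity. Since the jumps of the drivers are bounded by $\varepsilon/\sigma(\varepsilon)\to 0$ and $\varepsilon/\mu(\varepsilon)\to 0$ under hypotheses~(i)--(ii), the maximal jump of $Z_{\varepsilon}$ vanishes in probability, which upgrades $\mathbb{D}^{3}$--tightness to $\mathbb{C}^{3}$--tightness. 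For assertion~2, along any subsequence with $(Z_{\varepsilon_{k}},\tilde{Y}_{\varepsilon_{k}},Y_{\varepsilon_{k}})\to(Z,W,\mathbf{t})$, the joint (\textbf{UT}) of the drivers and the uniform convergence of coefficients in~(H.2) let me invoke the stability theorem of \cite{MS} (as in \cite[Th\'eor\`eme~3.5]{MS}) to pass to the limit in both integrals, obtaining $\int_{0}^{\cdot}a(Z(s_{-}))\,dW(s)$ and $\int_{0}^{\cdot}h(Z(s))\,ds$; hence $(Z,W,\mathbf{t})$ solves~(\ref{21juieq2}). Assertion~3 is then routine: if uniqueness in law holds for~(\ref{21juieq2}), all limit points of the tight family share one law, so the whole family converges.

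The main obstacle is the uniform moment bound in the first step. Because $\tilde{Y}_{\varepsilon}$ is not monotone, one cannot dominate $Z_{\varepsilon}$ pathwise by the integrator as in Theorem~\ref{fev09-eq3}; the martingale contribution must instead be handled through its quadratic variation, combining Burkholder--Davis--Gundy with Gronwall while keeping every constant independent of $\varepsilon$. A secondary delicate point is the passage from $\mathbb{D}^{3}$ to $\mathbb{C}^{3}$, which hinges on the vanishing of the maximal jumps of $Z_{\varepsilon}$; this is precisely where the bounds $\varepsilon/\sigma(\varepsilon)\to 0$, $\varepsilon/\mu(\varepsilon)\to 0$ and the independence hypothesis enter in an essential way.
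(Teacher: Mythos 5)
Your argument is correct in outline, but it takes a substantially more labour-intensive route than the paper, and it glosses the one step where that labour is concentrated. The paper's own proof is a short verification of the hypotheses of the packaged stability result \cite[Th\'eor\`eme 2.10]{MS}: independence gives the joint convergence $(Y_{\varepsilon},\tilde{Y}_{\varepsilon})\overset{\mathbb{C}}{\longrightarrow}(\mathbf{t},W)$; weak existence for (\ref{21juieq1}) and (\ref{21juieq2}) is quoted from \cite[Theorem 1.8]{JM}; the big-jump condition $\int_{|s|>1}|s|\,d\tilde{\Lambda}_{\varepsilon}(s)=\sigma(\varepsilon)^{-1}\int_{\sigma(\varepsilon)<|s|\leq\varepsilon}|s|\,d\Lambda(s)\rightarrow 0$ is checked using $\sigma(\varepsilon)/\varepsilon\rightarrow\infty$; and (H.1) supplies the continuity required, cf.\ \cite[Example 5.3]{KP1991}. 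All three assertions then come out of Th\'eor\`eme 2.10 at once, the (\textbf{UT}) input being Proposition~\ref{prop2sept20}; no tightness or moment estimate is carried out by hand. You instead rebuild the stability argument from scratch on the template of Theorem~\ref{fev09-eq3}: a uniform bound on $\sup_{t\leq 1}|Z_{\varepsilon}(t)|$, then \cite[Lemme 1-6, Lemme 1-7, Proposition 3-3]{MS} for tightness, vanishing jumps for the upgrade to $\mathbb{C}^{3}$, and a Th\'eor\`eme 3.5-type identification of limit points. This is viable, and it has the merit of making explicit where the martingale structure of $\tilde{Y}_{\varepsilon}$ and hypotheses (i)--(ii) enter; what it costs is the moment estimate, where your sketch is too quick on two points. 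First, the $L^{2}$ bound for $\int_{0}^{\cdot}h_{\varepsilon}(Z_{\varepsilon}(s_{-}))\,dY_{\varepsilon}(s)$ does not follow from monotonicity and $\mathbb{E}(Y_{\varepsilon}(1))=1$ alone: squaring an integral against a random increasing measure requires either the decomposition $Y_{\varepsilon}(t)=N_{\varepsilon}(t)+t$ with $N_{\varepsilon}$ a martingale satisfying $\mathbb{E}([N_{\varepsilon},N_{\varepsilon}](1))\leq\varepsilon/\mu(\varepsilon)\rightarrow 0$, or a uniform bound on $\mathbb{E}(Y_{\varepsilon}(1)^{2})$; likewise the BDG step for $\int_{0}^{\cdot}a_{\varepsilon}(Z_{\varepsilon}(s_{-}))\,d\tilde{Y}_{\varepsilon}(s)$ should invoke that the predictable compensator of $[\tilde{Y}_{\varepsilon},\tilde{Y}_{\varepsilon}]$ is the deterministic process $t$, not merely that $\mathbb{E}([\tilde{Y}_{\varepsilon},\tilde{Y}_{\varepsilon}](1))=1$, since otherwise the Gronwall loop does not close. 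Second, because (H.1) provides only weak solutions with no a priori moments, the whole estimate must be run on $Z_{\varepsilon}$ stopped at $T_{n}=\inf\{t:|Z_{\varepsilon}(t)|>n\}$ and finished with Fatou's lemma, and you should also record, as the paper does, that solutions of (\ref{21juieq1}) exist in the first place. None of this is a fatal gap, but these are precisely the technicalities that the paper's one-line appeal to Th\'eor\`eme 2.10 is designed to bypass.
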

\begin{proof}
First we know that $\{Y_{\varepsilon},\, \varepsilon\in (0,1)\}$ 
(resp.~$\{\tilde{Y}_{\varepsilon},\, \varepsilon\in (0,1)\}$) is a family of increasing processes (resp.~martingales)
 which converges to the continuous increasing process ${\bf{t}}$ (resp. to the continuous martingale $W$).
Since the two families are independents, then we have the following weak convergence 
\[
(Y_{\varepsilon}, \tilde{Y}_{\varepsilon}){ \overset{\mathbb{C}}{\longrightarrow}} ({\bf{t}},W),\quad
  \varepsilon \rightarrow 0. 
\]
Secondly, it is known that under (iv) equations (\ref{21juieq1}) and (\ref{21juieq2}) admit a weak solutions, see \cite[Theorem 1.8]{JM}.
Using the fact that $\sigma(\varepsilon)/ \varepsilon \longrightarrow \infty$ as $\varepsilon\rightarrow0$, we have 

\[
\int_{|s|>1}|s|d\tilde{\Lambda}_{\varepsilon}(s)=
(\sigma(\varepsilon))^{-1}\int_{\sigma(\varepsilon)<|s| \leq \varepsilon}|s|d\Lambda(s)\longrightarrow0.
\]

Finally the assumption \textbf{(H.1)} is sufficient for the continuity in the Skorohod space, 
cf. \cite[Example 5.3]{KP1991}. So the assertions 1-3 follow from \cite[Th{\'e}or{\`e}me 2.10]{MS}.
\end{proof}

\subsection*{Acknowledgement}

Financial support by PTDC/MAT/67965/2006 and FCT, POCTI-219 are gratefully acknowledged.
We are grateful to the anonymous referee for valuable suggestions on the first version of the paper.

\end{document}